\def\id{\mathrm{id}}
\def\conj{\mathrm{conj}}
\def\Aut{\mathrm{Aut}}
\def\Inn{\mathrm{Inn}}
\def\Imm{\mathrm{Im}}
\def\A{\mathrm{A}}
\def\Ss{\mathrm{S}}
\def\PSL{\mathrm{PSL}}
\def\PGL{\mathrm{PGL}}
\def\SL{\mathrm{SL}}
\newtheorem{lemma}{Lemma}
\newtheorem{proposition}{Proposition}
\newtheorem{theorem}{Theorem}
\newtheorem{corollary}{Corollary}
\newtheorem{remark}{Remark}
\newtheorem{example}{Example}
\newtheorem{definition}{Definition}
\begin{document}

\sloppy

\begin{center}
{\Large
Rota---Baxter operators on dihedral and alternating groups}

\smallskip

Alexey Galt, Vsevolod Gubarev
\end{center}

\begin{abstract}
Rota---Baxter operators on algebras, which appeared in 1960, have connections with different versions of the Yang---Baxter equation, pre- and postalgebras, double Poisson algebras, etc. 
In 2020, the notion of Rota---Baxter operator on a~group was defined by L. Guo, H. Lang, Yu. Sheng.

In 2023, V. Bardakov and the second author showed that all Rota---Baxter operators on simple sporadic groups are splitting, i.\,e. they are defined via exact factorizations.
In the current work, we clarify for which $n$, there exist non-splitting Rota---Baxter operators on the alternating group~$\A_n$. 
For the corresponding~$n$, we describe all non-splitting Rota---Baxter operators on~$\A_n$.
Moreover, we describe Rota---Baxter operators on dihedral groups~$D_{2n}$ providing the general construction which lies behind all non-splitting Rota---Baxter operators on $\A_n$ and $D_{2n}$.

{\it Keywords}:
Rota---Baxter operator, Rota---Baxter group, dihedral group, alternating group.
\end{abstract}

\section{Introduction}

In 1960~\cite{Baxter}, G. Baxter introduced the notion of Rota---Baxter operator on an algebra~$A$ as a linear operator~$R$ satisfying the relation
\begin{equation} \label{RBOnAlgebra}
R(x)R(y) = R( R(x)y + xR(y) + \lambda xy ),
\end{equation}
where $x,y\in A$ and the scalar $\lambda$ from the ground field~$F$ is fixed, and it is called a weight of~$R$.

The identity~\eqref{RBOnAlgebra} for $\lambda = 0$ may be considered a generalization of integration by parts formula.
When $\lambda\neq0$, the canonical example is a projection of the whole algebra~$A$ on its subalgebra~$A_1$ provided a decomposition $A = A_1\dotplus A_2$ of~$A$ into a direct vector-space sum of its subalgebras~$A_1,A_2$.

After more than 60 years of the study, a lot of connections and applications of Rota---Baxter operators were found: with different versions of the Yang---Baxter equation~\cite{Aguiar00,BelaDrin82,Semenov83}, pre- and postalgebras~\cite{Aguiar00,BBGN2011,GuoMonograph}, double Poisson algebras~\cite{DoubleLie2,DoubleLie,DoublePoissonFree}, etc.

Recently, in 2020 (published in 2021), the analogous notion of Rota---Baxter operator (of weight~$\pm1$) on groups was suggested by L. Guo, H. Lang, and Yu. Sheng~\cite{Guo2020}.
Since this pioneer article appeared, a dozen papers have been published, and the direction of Rota---Baxter groups is rapidly developing.
On the one hand, the correspondence between Rota---Baxter operators on Lie groups and 
Rota---Baxter operators on Lie algebras is studied~\cite{Guo2020,Jiang}.
On the other hand, the definition of Rota---Baxter operator on groups was extended on Clifford semigroups~\cite{Catino2023}, Hopf algebras~\cite{RBHopf}, and others.
There is a~deep connection~\cite{BG2} between Rota---Baxter operators on groups and so called skew left braces~\cite{GV2017,Problems}, which, in turn, serve as a source of set-theoretical solutions to the quantum Yang---Baxter equation.
The cohomology theory for Rota---Baxter (Lie) groups was presented in~\cite{Das,Jiang}.

Given a group~$G$, one has always trivial RB-operators $B_e\colon g \to e$ and $B_{-1}\colon g\to g^{-1}$. The analog of the projection operator mentioned above for algebras is the following:
given an exact factorization $G = HL$, the map
$$
B\colon hl \to l^{-1}
$$
is a~Rota---Baxter operator on~$G$. We will call it splitting RB-operator.
The criterion, when a given RB-operator is a~splitting one, was stated in~\cite{BG}.
It actually says that $R$ is splitting RB-operator on a group~$G$ if and only if $\Imm(B\widetilde{B}) = 1$, where 
$$
\widetilde{B} \colon G\to G, \quad \widetilde{B}(g) = g^{-1}B(g^{-1}) 
$$
is another RB-operator on~$G$ defined with the help of~$B$.
Different constructions of non-splitting Rota---Baxter operators on groups were suggested therein.
However, it occurs that there are no non-splitting RB-operators on all simple sporadic groups~\cite{BG}.

In the current work, we continue the mentioned in~\cite{BG} direction of description of RB-operators on all finite simple groups; here we deal with alternating groups $\A_n$.
The main result is the complete classification of (non-splitting) Rota---Baxter operators on~$\A_n$.
For any RB-operator~$B$ on a~group~$G$, one has a~factorization 
$$
G = \Imm(B)\cdot \Imm(\widetilde{B}).
$$
Thus, we may apply the known description of factorizations of finite simple groups~\cite{LPS_fact}. Further, we arrive at properties of sharply 2-transitive groups and involve the Zassenhaus theorems (Theorems XII.9.7 and XII.9.8 from \cite{HuppertB}).
As a consequence, we show that there are non-splitting RB-operators~$R$ on~$\A_n$ if and only if $n$ satisfies certain conditions, 
and there is an infinite number of such ones.
When $n$ is so, we clarify the general construction, which lies behind~$R$. 

We prove that if $G$ is a finite group, $B$ is an RB-operator on it, $R = \Imm(B\widetilde{B})$ is an abelian subgroup of~$G$, and $G = H\cdot L = \ker(B)\cdot \Imm(B)$, then $B( hl ) = C(l)$ for an RB-operator $C$ on $L$. Moreover, $\Imm(\widetilde{C})$ normalizes~$H$ and $C$ is a~homomorphism from $L$ onto its abelian subgroup isomorphic to~$R$.
We apply this result not only for alternating groups but also for dihedral ones.
We show that for any non-splitting RB-operator~$B$ on $\A_n$, we have $\Imm(B\widetilde{B})\simeq \mathbb{Z}_2$. 
Regarding to dihedral groups~$D_{2n}$, we prove that there are no non-splitting RB-operators on~$D_{2n}$, when $n$ is odd. When $n$ is even, we show that for any non-splitting RB-operator~$B$ on $D_{2n}$, we have 
either $\Imm(B\widetilde{B})\simeq \mathbb{Z}_2$ or $\Imm(B\widetilde{B})\simeq \mathbb{Z}_2\times \mathbb{Z}_2$.

Let us highlight that we suggest a new kind of equivalence on a set of Rota---Baxter operators. In~\cite{BG}, it was noted that given a Rota---Baxter group~$(G,B)$ and $\varphi\in \Aut(G)$, one has again an RB-operator $B^{(\varphi)}$ on $G$, which acts by the formula $B^{(\varphi)}(g) = \varphi^{-1}B\varphi(g)$. Therefore, it is natural to classify Rota---Baxter operators on a group~$G$ up to such action of~$\Aut(G)$. We present an equivalence obtained under the action of a subgroup of~$\Aut(G\times G)$ isomorphic to $(\Aut(G)\times \Inn(G))\rtimes \mathbb{Z}_2$. As far as we know, there is no analogous action for Rota---Baxter operators on algebras. We prove that such equivalence preserves main properties of Rota---Baxter operators, e.\,g. the property of splitting one. Moreover, the mentioned above construction of an RB-operator~$G$ such that $\Imm(B\widetilde{B})$ is abelian, is preserved too.
So, we find all non-splitting Rota---Baxter operators on $\A_n$ up to the equivalence.

The work is organized as follows. 
We provide the required preliminaries about Rota---Baxter groups in~Section 2 and about sharply doubly transitive groups in~Section 3, respectively.
In~Section 4, we define the new equivalence of RB-operators (Proposition~\ref{prop:equivalentRB}).
In~Section 5, we derive the construction of a Rota---Baxter operator~$B$ such that
$R = \Imm(B\widetilde{B})$ is abelian and $G = \ker(B)\cdot \Imm(B)$ (Lemma~\ref{lem:Main}).

In~Section 6, we describe Rota---Baxter operators on dihedral groups (Theorem~\ref{thm:Dihedral}).
In~Section 7, we classify, up to equivalence, Rota---Baxter operators on alternating groups~$\A_n$ (Theorem~\ref{thm:alternating}).
For small~$n$, we use a computer algebra system~GAP to compute RB-operators on~$\A_n$.
We use the reformulation of a Rota---Baxter operator~$B$ in terms of its graph
in $G\times G$~\cite{Jiang} (see Proposition~\ref{prop:RBviaSubgroups} below)
and write a~program with a starting point based on the program from~\cite{Rathee}.

\section{Preliminaries}

\begin{definition}[\cite{Guo2020}]
Let $G$ be a group.
A map $B\colon G\to G$ is called a Rota---Baxter operator of weight~1 if
\begin{equation}\label{RB}
B(g)B(h) = B( g B(g) h B(g)^{-1} )
\end{equation}
holds for all $g,h\in G$.
\end{definition}

Note that a notion of Rota---Baxter operator on an abelian~$G$ coincides with homomorphism.

\begin{proposition}[\cite{Guo2020}]\label{prop:initial}
Let $(G,B)$ be a Rota---Baxter group. Then

(a) $\Imm(B)$ and $\ker(B)$ are subgroups of~$G$,

(b) $B(e) = e$,

(c) $B(g)^{-1} = B(B(g)^{-1}g^{-1}B(g))$ for every $g \in G$,

(d) $B(h) = B(gh)$ for all $h\in G$ and $g\in \ker(B)$.
\end{proposition}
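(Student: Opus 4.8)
The plan is to deduce all four assertions directly from the defining identity~\eqref{RB} by suitable specializations of $g$ and $h$, handling them in the order (b), (d), (c), (a), since (a) rests on the previous three.

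I would start with (b) by setting $g = h = e$ in~\eqref{RB}: this yields $B(e)^2 = B(eB(e)eB(e)^{-1}) = B(e)$, so $B(e)$ is an idempotent element of the group~$G$ and hence equals~$e$. For (d), if $g \in \ker(B)$ then $B(g) = e$, and~\eqref{RB} immediately collapses to $B(h) = B(g e h e) = B(gh)$. The one genuinely clever point is (c): here I would substitute $h = B(g)^{-1} g^{-1} B(g)$, chosen precisely so that the argument of $B$ on the right-hand side of~\eqref{RB} telescopes, namely $g B(g) h B(g)^{-1} = g B(g)\, B(g)^{-1} g^{-1} B(g)\, B(g)^{-1} = e$. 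Then~\eqref{RB} together with~(b) gives $B(g) B(h) = B(e) = e$, that is, $B\bigl(B(g)^{-1} g^{-1} B(g)\bigr) = B(g)^{-1}$, which is exactly~(c).

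Finally, for (a): the right-hand side of~\eqref{RB} exhibits $\Imm(B)$ as closed under multiplication; it contains $e = B(e)$ by~(b); and~(c) shows $B(g)^{-1} \in \Imm(B)$ for every $g \in G$, so $\Imm(B)$ is a subgroup. For $\ker(B)$, note that $e \in \ker(B)$ by~(b); closure under products follows from~(d), since $g, h \in \ker(B)$ gives $B(gh) = B(h) = e$; and closure under inverses follows from~(c) applied with $B(g) = e$, which then reads $B(g^{-1}) = B(g)^{-1} = e$. I do not expect any serious obstacle — the whole argument is a short sequence of substitutions into~\eqref{RB} — and the only thing one really has to spot is the conjugation $h = B(g)^{-1} g^{-1} B(g)$ that trivializes the argument $g B(g) h B(g)^{-1}$ in step~(c).
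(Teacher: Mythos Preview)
Your argument is correct in every detail. However, note that the paper does not actually prove this proposition: it is stated with a citation to~\cite{Guo2020} and used as background, so there is no ``paper's own proof'' to compare against. What you have written is the standard derivation from the defining identity~\eqref{RB}, and it matches the original argument in~\cite{Guo2020}.
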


\begin{proposition}[\cite{Guo2020}]\label{prop:Btilde}
Let $(G,B)$ be a Rota---Baxter group. Then

(a) the map $\widetilde{B}$ defined as follows, $\widetilde{B}(g) = g^{-1}B(g^{-1})$ is a Rota---Baxter operator on~$G$,

(b) given $\varphi\in \Aut(G)$, $B^{(\varphi)} = \varphi^{-1}B\varphi$
is a~Rota---Baxter operator on $G$.
\end{proposition}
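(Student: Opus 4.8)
The plan is to verify the defining identity~\eqref{RB} directly for each of the two maps; in both cases the verification reduces to a single application of~\eqref{RB} for~$B$ once the substitutions are organized.

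For part~(b), put $C = B^{(\varphi)} = \varphi^{-1}B\varphi$, so that $C(g) = \varphi^{-1}\bigl(B(\varphi(g))\bigr)$ and, since $\varphi^{-1}$ is a homomorphism, the left-hand side of~\eqref{RB} for~$C$ equals $\varphi^{-1}\bigl(B(\varphi(g))\,B(\varphi(h))\bigr)$. On the other side, using that $\varphi$ is a homomorphism and $\varphi(C(g)) = B(\varphi(g))$, one gets $\varphi\bigl(g\,C(g)\,h\,C(g)^{-1}\bigr) = \varphi(g)\,B(\varphi(g))\,\varphi(h)\,B(\varphi(g))^{-1}$, whence $C\bigl(g\,C(g)\,h\,C(g)^{-1}\bigr) = \varphi^{-1}\bigl(B(\varphi(g)\,B(\varphi(g))\,\varphi(h)\,B(\varphi(g))^{-1})\bigr)$. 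Applying~\eqref{RB} for~$B$ at the pair $\varphi(g),\varphi(h)$ matches the two expressions, which proves~(b). This step is pure bookkeeping and presents no real obstacle.

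For part~(a), I would start from $\widetilde B(g) = g^{-1}B(g^{-1})$ and simplify the argument of $\widetilde B$ appearing on the right-hand side of~\eqref{RB}. Since $g\,\widetilde B(g) = B(g^{-1})$, one finds $u := g\,\widetilde B(g)\,h\,\widetilde B(g)^{-1} = B(g^{-1})\,h\,B(g^{-1})^{-1}g$, so that $u^{-1} = g^{-1}B(g^{-1})\,h^{-1}B(g^{-1})^{-1}$ and therefore $\widetilde B(u) = u^{-1}B(u^{-1}) = g^{-1}B(g^{-1})\,h^{-1}B(g^{-1})^{-1}\cdot B\bigl(g^{-1}B(g^{-1})\,h^{-1}B(g^{-1})^{-1}\bigr)$. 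The key observation is that~\eqref{RB} for~$B$ at the pair $g^{-1},h^{-1}$ reads $B(g^{-1})\,B(h^{-1}) = B\bigl(g^{-1}B(g^{-1})\,h^{-1}B(g^{-1})^{-1}\bigr)$, so the trailing factor collapses and $\widetilde B(u) = g^{-1}B(g^{-1})\,h^{-1}B(h^{-1}) = \widetilde B(g)\,\widetilde B(h)$, which is exactly the left-hand side of~\eqref{RB} for~$\widetilde B$.

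The only place that needs care is the inversion bookkeeping in part~(a): keeping the order of the factors correct when passing from $u$ to $u^{-1}$, and recognizing the resulting word as precisely the right-hand side of~\eqref{RB} for~$B$ evaluated at $(g^{-1},h^{-1})$. Once this is seen, both statements follow with no further ingredients; in particular no structural facts about Rota---Baxter groups (graphs in $G\times G$, descendent groups, and the like) are required.
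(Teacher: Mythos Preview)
Your proof is correct: both the conjugation identity in~(b) and the inversion bookkeeping in~(a) are carried out accurately, and the reduction of $\widetilde B(u)$ via~\eqref{RB} at $(g^{-1},h^{-1})$ is exactly the right move. The paper itself does not supply a proof of this proposition---it is simply quoted from~\cite{Guo2020}---so there is nothing to compare against; your direct verification is the standard argument and would be a suitable proof to include.
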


\begin{proposition}[\cite{Guo2020}]\label{prop:Derived}
Let $(G,\cdot,B)$ be a Rota---Baxter group. Then

(a) The pair $(G, \circ )$ with the product
\begin{equation}\label{R-product}
g\circ h = gB(g)hB(g)^{-1}, \quad g,h\in G,
\end{equation}
is a group.

(b) The operator $B$ is a Rota---Baxter operator on the group $(G,\circ)$.

(c) The map $B\colon (G,\circ) \to (G,\cdot)$
is a homomorphism of Rota---Baxter groups.
\end{proposition}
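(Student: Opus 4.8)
The engine behind all three parts is the observation that the Rota—Baxter axiom~\eqref{RB} is exactly the statement that $B$ transports the $\circ$-product to the $\cdot$-product. I would record first that, directly from~\eqref{RB} and~\eqref{R-product},
$$
B(g\circ h)=B\bigl(gB(g)hB(g)^{-1}\bigr)=B(g)B(h)\qquad\text{for all }g,h\in G,
$$
and then reduce everything else to this identity together with Proposition~\ref{prop:initial}.

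For part~(a) I would verify associativity of~$\circ$ by a short direct computation: using $B(g\circ h)=B(g)B(h)$,
$$
(g\circ h)\circ k=(g\circ h)\,B(g)B(h)\,k\,B(h)^{-1}B(g)^{-1}=gB(g)hB(h)kB(h)^{-1}B(g)^{-1},
$$
and expanding $g\circ(h\circ k)$ gives the same element. That $e$ is a two-sided unit is immediate from Proposition~\ref{prop:initial}(b), since $B(e)=e$ forces $e\circ g=g\circ e=g$. For inverses I would take $\overline g:=B(g)^{-1}g^{-1}B(g)$; then $g\circ\overline g=gB(g)\cdot B(g)^{-1}g^{-1}B(g)\cdot B(g)^{-1}=e$, and $\overline g\circ g=e$ follows once one knows $B(\overline g)=B(g)^{-1}$, which is precisely Proposition~\ref{prop:initial}(c). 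Hence $(G,\circ)$ is a group.

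Granting (a), part~(c) is almost immediate: the identity $B(g\circ h)=B(g)B(h)$ says exactly that $B\colon(G,\circ)\to(G,\cdot)$ is a group homomorphism (it then extends multiplicatively to any finite $\circ$-product), and compatibility with the Rota—Baxter operators is the tautology $B\circ B=B\circ B$. For part~(b) I would then deduce the Rota—Baxter identity for $B$ on $(G,\circ)$ from (a) and (c): writing $\overline x$ for the $\circ$-inverse of $x$, the right-hand side $B\bigl(g\circ B(g)\circ h\circ\overline{B(g)}\bigr)$ equals $B(g)\,B(B(g))\,B(h)\,B\bigl(\overline{B(g)}\bigr)$ by the homomorphism property, and $B\bigl(\overline{B(g)}\bigr)=B(B(g))^{-1}$ because a homomorphism carries $\circ$-inverses to $\cdot$-inverses; this is $B(g)B(B(g))B(h)B(B(g))^{-1}$, which by~\eqref{R-product} is exactly $B(g)\circ B(h)$.

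All of this is routine bookkeeping; the only point that uses genuine input beyond rewriting is the check that $\overline g$ is a \emph{left} $\circ$-inverse, resting on the identity $B\bigl(B(g)^{-1}g^{-1}B(g)\bigr)=B(g)^{-1}$ from Proposition~\ref{prop:initial}(c). So I expect no real obstacle here, only the need to keep careful track of which product and which inverse appears at each place.
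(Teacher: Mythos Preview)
Your argument is correct. The multiplicativity $B(g\circ h)=B(g)B(h)$ is exactly the Rota---Baxter axiom rewritten, and everything else does indeed reduce to bookkeeping plus Proposition~\ref{prop:initial}(b),(c); the verifications of associativity, identity, inverses, and of part~(b) via the homomorphism property all go through as you describe.

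There is nothing to compare on the paper's side: this proposition is quoted from~\cite{Guo2020} without proof, so the paper offers no argument of its own. Your write-up is the standard one and would serve well as a self-contained proof here.
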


We will denote the group $(G, \circ )$ as $G^{(\circ)}$.

\begin{proposition}[\cite{Guo2020}]\label{Prop:STSh} 
If $(G, B)$ is an RB-group, then

(a) $\ker(B)$ and $\ker(\widetilde{B})$ are normal in $G^{(\circ)}$,

(b) $\ker(B) \unlhd \Imm(\widetilde{B})$ and $\ker(\widetilde{B}) \unlhd \Imm(B)$ in $G$,

(c) We have the isomorphism of the quotient groups
\begin{equation}\label{FactorIso}
\Imm(\widetilde{B})/\ker(B)\simeq \Imm(B)/\ker(\widetilde{B}),
\end{equation}

(d) We have the factorization
\begin{equation}\label{ImageFactorization}
G = \Imm(\widetilde{B})\Imm(B).
\end{equation}
\end{proposition}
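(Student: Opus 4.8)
The plan is to deduce all of (a)--(d) from the observation that $G^{(\circ)}$ carries, besides the homomorphism $B\colon G^{(\circ)}\to(G,\cdot)$ of Proposition~\ref{prop:Derived}(c), a \emph{second} natural homomorphism into $(G,\cdot)$, namely
$$
T\colon G^{(\circ)}\to G,\qquad T(g)=gB(g).
$$
That $T$ is a homomorphism is a one-line check using that $B$ is a homomorphism on $G^{(\circ)}$: $T(g\circ h)=(g\circ h)B(g\circ h)=gB(g)\,hB(g)^{-1}\cdot B(g)B(h)=gB(g)\cdot hB(h)=T(g)T(h)$. Two elementary remarks tie $T$ to $\widetilde B$. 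First, from $\widetilde B(g)=g^{-1}B(g^{-1})$ the substitution $h=g^{-1}$ gives $\Imm(\widetilde B)=\{\,hB(h):h\in G\,\}=\Imm(T)$, while $\ker(T)=\{g:B(g)=g^{-1}\}$ coincides with $\ker(\widetilde B)=\{g:B(g^{-1})=g\}$ by Proposition~\ref{prop:initial}(c). Second, $\widetilde{\widetilde B}=B$ (direct computation), so every assertion about the pair $(B,\widetilde B)$ holds also for $(\widetilde B,B)$, and it suffices to prove one half of (a)--(c). Finally, for $x\in\ker(B)$ Proposition~\ref{prop:initial}(d) yields $B(x^{-1})=e$, whence $T(x)=x$ and $\widetilde B(x^{-1})=x$; in particular $T$ fixes $\ker(B)$ pointwise and $\ker(B)\subseteq\Imm(\widetilde B)$.

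Part (a) is then immediate: $\ker(B)$ is the kernel of the homomorphism $B\colon G^{(\circ)}\to G$ and $\ker(\widetilde B)=\ker(T)$ is the kernel of $T$, so both are normal in $G^{(\circ)}$. For (b), we already have $\ker(B)\subseteq\Imm(\widetilde B)$; since $T\colon G^{(\circ)}\twoheadrightarrow\Imm(\widetilde B)$ is surjective and $\ker(B)\unlhd G^{(\circ)}$ by (a), the image $T(\ker B)$ is normal in $\Imm(\widetilde B)$, and $T(\ker B)=\ker(B)$ because $T$ fixes $\ker(B)$ pointwise; thus $\ker(B)\unlhd\Imm(\widetilde B)$ in $(G,\cdot)$, and applying this to $\widetilde B$ gives $\ker(\widetilde B)\unlhd\Imm(B)$.

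For (c), set $N=\ker(B)\cdot\ker(\widetilde B)$, a normal subgroup of $G^{(\circ)}$. Under the isomorphism $G^{(\circ)}/\ker(B)\xrightarrow{\sim}\Imm(B)$ induced by $B$, the subgroup $N/\ker(B)$ corresponds to $B(\ker(\widetilde B))=\ker(\widetilde B)$ (since $B(g)=g^{-1}$ on $\ker(\widetilde B)$), so the third isomorphism theorem gives $G^{(\circ)}/N\cong\Imm(B)/\ker(\widetilde B)$. Symmetrically, under $G^{(\circ)}/\ker(\widetilde B)\xrightarrow{\sim}\Imm(\widetilde B)$ induced by $T$, the subgroup $N/\ker(\widetilde B)$ corresponds to $T(\ker(B))=\ker(B)$, giving $G^{(\circ)}/N\cong\Imm(\widetilde B)/\ker(B)$; comparing the two isomorphisms proves~\eqref{FactorIso}. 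Part (d) is direct: for any $g\in G$ write $g=(gB(g))\cdot B(g)^{-1}$, where $gB(g)=\widetilde B(g^{-1})\in\Imm(\widetilde B)$ and $B(g)^{-1}\in\Imm(B)$; hence $G=\Imm(\widetilde B)\,\Imm(B)$.

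The only genuinely non-mechanical step is isolating the homomorphism $T$ and identifying $\ker(T)=\ker(\widetilde B)$, $\Imm(T)=\Imm(\widetilde B)$; once $G^{(\circ)}$ is presented together with the two homomorphisms $B$ and $T$ into $(G,\cdot)$, the four assertions become routine bookkeeping with the isomorphism theorems, and I do not anticipate any further obstacle.
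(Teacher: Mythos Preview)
The paper does not supply its own proof of this proposition; it is quoted from~\cite{Guo2020}. Your argument is correct and complete. The homomorphism you call $T$ is exactly the map the paper denotes $B_+$ (introduced right after this proposition), and your identifications $\Imm(T)=\Imm(\widetilde B)$, $\ker(T)=\ker(\widetilde B)$ are the key observations that make the isomorphism theorems do all the work. One small point worth making explicit: when you form $N=\ker(B)\cdot\ker(\widetilde B)$ you mean the product in $G^{(\circ)}$; it is harmless here because for $a\in\ker(B)$ one has $a\circ b=ab$, so the $\circ$-product and the $\cdot$-product of these two subgroups coincide as sets, but a reader might pause over the notation.
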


Denote by $B_+$ the map on the group~$G$ defined as follows, $B_+(g) = gB(g)$. 

\begin{proposition}[{\cite[Lemma~5(c)]{BG}}]\label{lem:BBTildeCommute}
Given a Rota---Baxter group $(G,B)$, one has $BB_+ = B_+B$.  
\end{proposition}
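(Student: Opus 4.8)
\emph{Plan.} The statement is pointwise in nature, so I would fix an arbitrary $g\in G$ and reduce the claimed identity $BB_+=B_+B$ to a single substitution into the defining relation~\eqref{RB}. Unfolding the definition $B_+(g)=gB(g)$, the left-hand side is $BB_+(g)=B\bigl(gB(g)\bigr)$, while the right-hand side is $B_+B(g)=B_+\bigl(B(g)\bigr)=B(g)\,B\bigl(B(g)\bigr)$. Thus the proposition is equivalent to the assertion $B\bigl(gB(g)\bigr)=B(g)\,B\bigl(B(g)\bigr)$ for every $g\in G$.

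\emph{Key step.} I would now apply~\eqref{RB} with the same $g$ and with $h:=B(g)$. The relation reads $B(g)B(h)=B\bigl(gB(g)\,h\,B(g)^{-1}\bigr)$, and for $h=B(g)$ the argument on the right collapses, since $B(g)\,B(g)\,B(g)^{-1}=B(g)$. Hence $B(g)\,B\bigl(B(g)\bigr)=B\bigl(gB(g)\bigr)$, which is exactly what was required. Therefore $B_+B(g)=BB_+(g)$ for all $g$, i.e. $BB_+=B_+B$.

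\emph{On the difficulty.} This is essentially a one-line computation, so there is no genuine obstacle; the only points deserving care are the bookkeeping of the order of composition (so that $B_+B$ is read as $g\mapsto B(g)\,B(B(g))$ and not $g\mapsto B(B(g))\,B(g)$) and the cancellation of the factors $B(g)$ and $B(g)^{-1}$ inside the argument of $B$ on the right-hand side of~\eqref{RB}. Everything else follows directly from Proposition~\ref{prop:initial} only insofar as $B(e)=e$ is occasionally convenient, but it is not even needed here.
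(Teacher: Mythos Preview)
Your proof is correct: the substitution $h=B(g)$ in~\eqref{RB} immediately yields $B(g)B(B(g))=B(gB(g))$, which is precisely $B_+B(g)=BB_+(g)$. The paper does not supply its own proof of this proposition---it is simply quoted from~\cite[Lemma~5(c)]{BG}---so there is nothing to compare against; your argument is the natural (and essentially the only) one-line verification.
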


\begin{remark}\label{rem:BBTildeCommute}
Because of Proposition~\ref{lem:BBTildeCommute}, one may note
$\Imm(B\widetilde{B}) = \Imm(\widetilde{B}B)$.
Indeed, 
$$
(B\widetilde{B})(g) 
= (BB_+)(g^{-1})
= (B_+B)(g^{-1})
= \widetilde{B}((B(g^{-1})^{-1}).
$$
However, the equality $B\widetilde{B}=\widetilde{B}B$ is not true, see Example~\ref{exm:BBTildeNotCommute} below.   
\end{remark}

Given an integer~$n$, denote by $a^{\circ(n)}$ the $n$th power of $a$ under the product $\circ$. 

\begin{proposition}[{\cite[Proposition~14]{BG}}]\label{prop:CircPower}
Given a Rota---Baxter group $(G,B)$, we have the equality
$a^{\circ(k)} = (B_+(a))^k(B(a))^{-k}$ for all $a\in G$ and $k\in \mathbb{Z}$.   \end{proposition}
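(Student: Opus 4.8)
The plan is to proceed by induction on $k$, using the fact recorded in Proposition~\ref{prop:Derived}(c) that $B$ is a group homomorphism from $G^{(\circ)}$ to $(G,\cdot)$; in particular $B(a^{\circ(k)}) = (B(a))^k$ for every $a\in G$ and every $k\in\mathbb{Z}$, since homomorphisms respect powers. This identity is the engine of the argument, so I would state and invoke it at the outset.

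First I would dispose of the base cases. For $k=0$ both sides equal $e$, and for $k=1$ we have $a^{\circ(1)} = a = aB(a)B(a)^{-1} = (B_+(a))^{1}(B(a))^{-1}$ by the definition $B_+(a)=aB(a)$. For the inductive step with $k\ge 1$, I would write, using the product~\eqref{R-product},
$$
a^{\circ(k+1)} = a^{\circ(k)}\circ a = a^{\circ(k)}\, B(a^{\circ(k)})\, a\, B(a^{\circ(k)})^{-1},
$$
then substitute $B(a^{\circ(k)}) = (B(a))^{k}$ together with the inductive hypothesis $a^{\circ(k)} = (B_+(a))^{k}(B(a))^{-k}$. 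The factors $(B(a))^{-k}$ and $(B(a))^{k}$ cancel in the middle, leaving $(B_+(a))^{k}\, a\, (B(a))^{-k}$; since $a = B_+(a)B(a)^{-1}$, this rewrites as $(B_+(a))^{k+1}(B(a))^{-(k+1)}$, completing the step.

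For negative exponents I would run the analogous downward induction based on $a^{\circ(k-1)} = a^{\circ(k)}\circ a^{\circ(-1)}$, after first checking the case $k=-1$ directly: one computes $a\circ\big((B_+(a))^{-1}B(a)\big) = B_+(a)(B_+(a))^{-1}B(a)B(a)^{-1} = e$, so $a^{\circ(-1)} = (B_+(a))^{-1}B(a)$, which is precisely the claimed formula at $k=-1$. Alternatively, once the case $k\ge 0$ is known, one may simply verify via the homomorphism identity that $(B_+(a))^{-k}(B(a))^{k}$ is the $\circ$-inverse of $(B_+(a))^{k}(B(a))^{-k}$, which extends the formula to all of $\mathbb{Z}$ at once.

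I do not expect a genuine obstacle here: the statement is essentially a telescoping computation. The only points that need care are the correct bookkeeping of the exponents of $B(a)$ in the inductive step and the treatment of $k<0$, both of which become routine as soon as the identity $B(a^{\circ(k)}) = (B(a))^{k}$ is available.
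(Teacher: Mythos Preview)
Your argument is correct: the induction on $k$ goes through exactly as you describe, the cancellation in the inductive step is valid, and both of your treatments of negative $k$ (direct verification of $a^{\circ(-1)}$, or checking the $\circ$-inverse) work. Note, however, that the paper does not supply its own proof of this proposition --- it is quoted as \cite[Proposition~14]{BG} and stated without argument --- so there is nothing in the present paper to compare your approach against; your write-up would serve as a self-contained proof of the cited result.
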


Let us deduce an easy but important equality.

\begin{lemma}\label{lem:ImagesManipul}
Let $(G, B)$ be an RB-group. Then
$\Imm(B\widetilde{B}) = \Imm(\widetilde{B}B) = \Imm(B)\cap \Imm(\widetilde{B})$.
\end{lemma}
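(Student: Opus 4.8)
The plan is to prove the two equalities separately. First I would establish that $\Imm(B\widetilde{B}) = \Imm(B)\cap\Imm(\widetilde{B})$, and then note that the identity $\Imm(B\widetilde{B}) = \Imm(\widetilde{B}B)$ follows immediately from Remark~\ref{rem:BBTildeCommute} together with the symmetric argument (or simply by applying the first equality with the roles of $B$ and $\widetilde{B}$ interchanged, since $\widetilde{\widetilde{B}} = B$).

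For the inclusion $\Imm(B\widetilde{B}) \subseteq \Imm(B)\cap\Imm(\widetilde{B})$: any element of the form $B(\widetilde{B}(g))$ is visibly in $\Imm(B)$; to see it is also in $\Imm(\widetilde{B})$, I would use Proposition~\ref{lem:BBTildeCommute} and Remark~\ref{rem:BBTildeCommute}, which express $(B\widetilde{B})(g)$ as $\widetilde{B}$ applied to something, namely $\widetilde{B}(B(g^{-1})^{-1})$. Hence every value of $B\widetilde{B}$ lies in $\Imm(\widetilde{B})$ as well, giving the inclusion into the intersection.

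The reverse inclusion $\Imm(B)\cap\Imm(\widetilde{B}) \subseteq \Imm(B\widetilde{B})$ is the step I expect to be the main obstacle. Here I would take $x\in\Imm(B)\cap\Imm(\widetilde{B})$, write $x = B(a)$ and $x = \widetilde{B}(b) = b^{-1}B(b^{-1})$ for suitable $a,b$, and try to exhibit $x$ as a single value of $B\widetilde{B}$. The natural candidate computation is to feed an appropriate element into $\widetilde{B}$ and check that $B$ of the result returns $x$; the key tool should be Proposition~\ref{prop:initial}(d), which says $B$ is constant on cosets of $\ker(B)$, together with Proposition~\ref{prop:initial}(c) and the analogous statement for $\widetilde{B}$ relating $B(g)$, $\widetilde{B}(g)$ and their inverses. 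Concretely, I expect that $\widetilde{B}(b)\in\Imm(\widetilde{B})$ can be moved by an element of $\ker(B)$ so that applying $B$ recovers $x=B(a)$; controlling which element of $\ker(B)$ is needed, and verifying it actually works, is where the real content lies. An alternative route, if the direct coset manipulation is awkward, is to use Proposition~\ref{Prop:STSh}: from $\ker(\widetilde{B}) \unlhd \Imm(B)$ and $\ker(B)\unlhd\Imm(\widetilde{B})$ one gets that $\Imm(B)\cap\Imm(\widetilde{B})$ contains both $\ker(B)$ and $\ker(\widetilde{B})$, and the isomorphism~\eqref{FactorIso} together with the fact that $B$ and $\widetilde{B}$ induce the relevant quotient maps should force $\Imm(B\widetilde{B})$ to fill out the whole intersection. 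I would first attempt the short direct argument via Proposition~\ref{prop:initial}(d), and fall back on the structural argument through Proposition~\ref{Prop:STSh} only if needed.

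Finally, once $\Imm(B\widetilde{B}) = \Imm(B)\cap\Imm(\widetilde{B})$ is in hand, the equality $\Imm(\widetilde{B}B) = \Imm(\widetilde{B})\cap\Imm(B)$ is the same statement with $B$ replaced by $\widetilde{B}$ (using $\widetilde{\widetilde{B}} = B$, which is immediate from Proposition~\ref{prop:Btilde}(a) and the definition of $\widetilde{B}$), and the intersection is symmetric, so all three sets coincide.
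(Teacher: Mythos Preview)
Your overall structure is sound: the easy inclusion via Remark~\ref{rem:BBTildeCommute} and the final symmetry argument using $\widetilde{\widetilde{B}}=B$ are both correct and match the paper's reasoning. The difficulty, as you rightly identify, is the reverse inclusion $\Imm(B)\cap\Imm(\widetilde{B})\subseteq\Imm(B\widetilde{B})$, and here both of your proposed routes have problems.

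Your alternative structural route contains a genuine error: from $\ker(\widetilde{B})\unlhd\Imm(B)$ and $\ker(B)\unlhd\Imm(\widetilde{B})$ you \emph{cannot} conclude that $\Imm(B)\cap\Imm(\widetilde{B})$ contains both kernels. Proposition~\ref{Prop:STSh} gives $\ker(B)\subseteq\Imm(\widetilde{B})$, not $\ker(B)\subseteq\Imm(B)$; indeed for any splitting RB-operator the intersection is trivial while the kernels typically are not. So that fallback does not get off the ground.

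Your primary route, shifting by an element of $\ker(B)$ via Proposition~\ref{prop:initial}(d), is more circuitous than necessary and you do not indicate how to complete it; the obstruction is that to land in $\Imm(\widetilde{B})$ after such a shift you would need your chosen $B$-preimage $a$ to already lie in $\Imm(\widetilde{B})$, which is not given. The paper's argument bypasses all of this: write $x=B(g)=hB(h)$, so that $x=\widetilde{B}(h^{-1})$. Then simply observe
\[
h \;=\; B(g)\,B(h)^{-1}\;\in\;\Imm(B),
\]
since $\Imm(B)$ is a subgroup (Proposition~\ref{prop:initial}(a)); hence $h^{-1}\in\Imm(B)$ and $x=\widetilde{B}(h^{-1})\in\Imm(\widetilde{B}B)$. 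Proposition~\ref{prop:initial}(c) is used only to make the membership $h\in\Imm(B)$ explicit by writing $h=B(z)$ for a concrete $z$, but the subgroup property alone already suffices.
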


\begin{proof}
The first equality follows from Remark~\ref{rem:BBTildeCommute} and it implies the inclusion $\Imm(B\widetilde{B})\subseteq \Imm(B)\cap \Imm(\widetilde{B})$.    
Suppose that $x \in \Imm(B)\cap \Imm(\widetilde{B})$, i.\,e. $x = B(g) = hB(h)$ for some $g,h\in G$. By Proposition~\ref{prop:initial}c), we have
$$
h = B(g)B(h)^{-1}
= B(g)B(B(h)^{-1}h^{-1}B(h))
= B(z)
$$
for $z = g\circ B(h)^{-1}h^{-1}B(h)$.
Since $\Imm(B)$ is a subgroup, $h^{-1}\in \Imm(B)$.
Hence, $x = \widetilde{B}(h^{-1}) \in \Imm(\widetilde{B}B)$.
\end{proof}

\begin{example}[{\cite[Lemma~2.6]{Guo2020}}]\label{exm:split}
Let $G$ be a group. Given an exact factorization $G = HL$,
a map $B\colon G\to G$ defined as follows,
$$
B(hl) = l^{-1}
$$
is a Rota---Baxter operator on~$G$.
\end{example}

We call such Rota---Baxter operator on~$G$ a~splitting Rota---Baxter operator.

\begin{proposition}[{\cite[Proposition 16]{BG}}]\label{prop:SplittingCond}
Let $G$ be a group and let $B\colon G\to G$ be an RB-operator on~$G$.
Then $B$ is a splitting Rota---Baxter operator on~$G$
if and only if $\Imm(\widetilde{B}B) {=} 1$.
\end{proposition}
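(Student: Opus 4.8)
The plan is to run everything through the two equivalent descriptions of $\Imm(\widetilde{B}B)$ supplied by Lemma~\ref{lem:ImagesManipul}, namely $\Imm(\widetilde{B}B) = \Imm(B\widetilde{B}) = \Imm(B)\cap\Imm(\widetilde{B})$, together with the normality relations in Proposition~\ref{Prop:STSh}(b) and the factorization~\eqref{ImageFactorization}. The elementary remark I will use repeatedly is that for maps $X,Y\colon G\to G$ one has $\Imm(XY) = 1$ if and only if $\Imm(Y)\subseteq\ker(X)$.

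For the ``only if'' direction I would take $B$ to be the splitting RB-operator of an exact factorization $G = HL$, so $B(hl) = l^{-1}$ as in Example~\ref{exm:split}, and hence $\Imm(B) = L$. For each $l\in L$, writing $l^{-1} = e\cdot l^{-1}$ with $e\in H$ and $l^{-1}\in L$ gives $B(l^{-1}) = l$, so $\widetilde{B}(l) = l^{-1}B(l^{-1}) = e$. Thus $\Imm(B)\subseteq\ker(\widetilde{B})$, which by the remark above means precisely $\Imm(\widetilde{B}B) = 1$.

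For the ``if'' direction I would assume $\Imm(\widetilde{B}B) = 1$. Applying the remark with $(X,Y) = (\widetilde{B},B)$ gives $\Imm(B)\subseteq\ker(\widetilde{B})$, and combining with the reverse inclusion $\ker(\widetilde{B})\leq\Imm(B)$ from Proposition~\ref{Prop:STSh}(b) yields $\Imm(B) = \ker(\widetilde{B})$. Since $\Imm(B\widetilde{B}) = \Imm(\widetilde{B}B) = 1$ by Lemma~\ref{lem:ImagesManipul}, the same reasoning with $(X,Y) = (B,\widetilde{B})$ and Proposition~\ref{Prop:STSh}(b) once more gives $\Imm(\widetilde{B}) = \ker(B)$. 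Then~\eqref{ImageFactorization} becomes $G = \ker(B)\Imm(B)$, and this factorization is exact because $\ker(B)\cap\Imm(B) = \Imm(\widetilde{B})\cap\Imm(B) = \Imm(\widetilde{B}B) = 1$, again by Lemma~\ref{lem:ImagesManipul}. To see that $B$ is exactly the splitting operator of this factorization, I would note that for $l\in\Imm(B) = \ker(\widetilde{B})$ one has $e = \widetilde{B}(l) = l^{-1}B(l^{-1})$, so $B(l^{-1}) = l$; as $l$ ranges over the subgroup $\Imm(B)$ this says $B(l) = l^{-1}$ on all of $\Imm(B)$, and then Proposition~\ref{prop:initial}(d) gives $B(hl) = B(l) = l^{-1}$ for $h\in\ker(B)$, $l\in\Imm(B)$.

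I do not expect a serious obstacle. The one point that needs care is that the factorization $G = \ker(B)\cdot\Imm(B)$ recovered in the second direction is genuinely exact, i.e.\ $\ker(B)\cap\Imm(B) = 1$, which rests essentially on Lemma~\ref{lem:ImagesManipul}; so I would make sure that lemma, and the normality in Proposition~\ref{Prop:STSh}(b), are in place before running the argument. Everything else is short bookkeeping with Propositions~\ref{prop:initial} and~\ref{prop:Btilde}.
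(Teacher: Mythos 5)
Your argument is correct and complete. Note, however, that the paper itself gives no proof of Proposition~\ref{prop:SplittingCond}: it is imported verbatim from~\cite[Proposition~16]{BG}, so there is no in-paper argument to compare yours against. What you wrote is a sound, self-contained derivation from material that precedes the proposition: the ``only if'' direction is the direct computation $\widetilde{B}(l)=l^{-1}B(l^{-1})=e$ for $l\in\Imm(B)=L$; the ``if'' direction correctly combines $\Imm(B)\subseteq\ker(\widetilde{B})$ with the reverse inclusion from Proposition~\ref{Prop:STSh}(b) (and symmetrically for $\ker(B)$, $\Imm(\widetilde{B})$), uses~\eqref{ImageFactorization} to get $G=\ker(B)\cdot\Imm(B)$, and uses Lemma~\ref{lem:ImagesManipul} to see that this factorization is exact, after which $B(hl)=l^{-1}$ follows from $\widetilde{B}|_{\Imm(B)}=e$ and Proposition~\ref{prop:initial}(d). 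There is no circularity: Lemma~\ref{lem:ImagesManipul} is proved in the paper using only Remark~\ref{rem:BBTildeCommute} and Proposition~\ref{prop:initial}(c), independently of the splitting criterion. The only cosmetic caveat is that your appeal to Lemma~\ref{lem:ImagesManipul} in the ``if'' direction could be trimmed: once you know $\Imm(B)=\ker(\widetilde{B})$ and $\Imm(\widetilde{B})=\ker(B)$, exactness also follows from $\ker(B)\cap\Imm(B)\subseteq\Imm(\widetilde{B})\cap\Imm(B)=\Imm(\widetilde{B}B)=1$, which is exactly what you did, so nothing needs changing.
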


\begin{proposition}[{\cite[Proposition~20]{BG}}] \label{prop:semidirect}
Given a semidirect product $G = H \rtimes L$, let $C$ be a Rota---Baxter
operator on $L$. Then a map $B \colon G \to G$ defined by the formula $B(hl) = C(l)$, where $h \in H$ and $l \in L$, is a Rota---Baxter operator.
\end{proposition}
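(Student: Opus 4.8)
The plan is to verify the defining identity~\eqref{RB} for $B$ by a direct computation in the semidirect product. Write arbitrary elements of $G$ in their unique normal form $g = h_1 l_1$ and $g' = h_2 l_2$ with $h_1,h_2 \in H$ and $l_1,l_2 \in L$; note first that $B$ is well defined precisely because the decomposition $G = H\rtimes L$ is unique. By definition $B(g) = C(l_1)$ and $B(g') = C(l_2)$, so the left-hand side of~\eqref{RB} equals $C(l_1)C(l_2)$.

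For the right-hand side I would compute the normal form of
$$
w := gB(g)\,g'B(g)^{-1} = h_1\, l_1 C(l_1)\, h_2\, l_2\, C(l_1)^{-1},
$$
and read off its $L$-component. The key observation is that since $H\unlhd G$, conjugating the $H$-letter $h_2$ by the $L$-word $l_1C(l_1)$ standing to its left keeps it inside $H$; moving it to the left therefore yields
$$
w = \bigl( h_1\cdot (l_1C(l_1))\, h_2\, (l_1C(l_1))^{-1} \bigr)\cdot \bigl( l_1 C(l_1)\, l_2\, C(l_1)^{-1} \bigr),
$$
where the first factor lies in $H$ and the second in $L$. Hence $B(w) = C\bigl( l_1 C(l_1)\, l_2\, C(l_1)^{-1} \bigr)$, and applying the Rota---Baxter axiom~\eqref{RB} for the operator $C$ on $L$ (in the variables $l_1, l_2$) turns the right-hand side into $C(l_1)C(l_2)$, matching the left-hand side. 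This completes the verification.

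There is essentially no genuine obstacle here beyond careful bookkeeping with the normal form. The single point that needs attention is that the value of $B$ on any element of $G$ depends only on the $L$-component of its normal form, combined with the use of normality of $H$ to identify the $L$-component of $w$ as exactly $l_1 C(l_1)\, l_2\, C(l_1)^{-1}$; once that is in place, the claim is immediate from the Rota---Baxter identity for $C$. (One could also observe as a byproduct that $\ker(B) = H\rtimes\ker(C)$ and $\Imm(B) = \Imm(C)$, but this is not needed for the statement.)
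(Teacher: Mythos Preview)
Your proof is correct. The paper does not prove this proposition itself (it is quoted from~\cite{BG}), but your direct verification is essentially identical to the proof the paper gives immediately afterwards for Corollary~\ref{coro:Main}: conjugate the $H$-factor past the $L$-word $lC(l)$, read off the $L$-component, and apply the Rota--Baxter identity for~$C$.
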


\begin{corollary}\label{coro:Main}
Given an exact factorization  $G = HL$, let $C$ be a Rota---Baxter
operator on $L$. If $\Imm(\widetilde{C})\leqslant L$ normalizes $H$, then a map $B \colon G \to G$ defined by the formula $B(hl) = C(l)$, where $h \in H$ and $l \in L$, is a Rota---Baxter operator.
\end{corollary}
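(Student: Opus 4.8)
The plan is to deduce Corollary~\ref{coro:Main} from Proposition~\ref{prop:semidirect} by verifying that, under the hypothesis that $\Imm(\widetilde C)$ normalizes~$H$, the map $B(hl) = C(l)$ actually coincides with the map arising from a genuine semidirect product on which Proposition~\ref{prop:semidirect} applies. Since $G = HL$ is an exact factorization, every element of~$G$ is uniquely written as $hl$ with $h\in H$, $l\in L$, so the formula $B(hl) = C(l)$ is well defined as a set map regardless of normality; the only issue is the Rota---Baxter identity~\eqref{RB}. So the task is to check~\eqref{RB} directly, i.e. that $B(g_1)B(g_2) = B\bigl(g_1 B(g_1) g_2 B(g_1)^{-1}\bigr)$ for all $g_1, g_2\in G$.

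First I would write $g_i = h_i l_i$, so that $B(g_i) = C(l_i)$. The left-hand side is $C(l_1)C(l_2)$, which by the Rota---Baxter identity for~$C$ on~$L$ equals $C\bigl(l_1 C(l_1) l_2 C(l_1)^{-1}\bigr)$. For the right-hand side I must compute the $L$-component of $g_1 B(g_1) g_2 B(g_1)^{-1} = h_1 l_1 C(l_1) h_2 l_2 C(l_1)^{-1}$ in the factorization $G = HL$, and show it is $l_1 C(l_1) l_2 C(l_1)^{-1}$. The key move is to slide $h_2$ past $l_1 C(l_1)$: since $l_1 C(l_1) = B_+(l_1)$... more simply, I would observe $C(l_1)^{-1} \in \Imm(\widetilde C)$ up to the standard manipulation (recall $\widetilde C(x) = x^{-1}C(x^{-1})$, and by Proposition~\ref{prop:initial}c) the set $\Imm(\widetilde C)$ contains $C(l)^{-1}$ for each~$l$, as exploited in the proof of Lemma~\ref{lem:ImagesManipul}), hence conjugation by $C(l_1)^{-1}$ normalizes~$H$. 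Therefore $C(l_1) h_2 C(l_1)^{-1} = h_2' \in H$, and also $l_1 h_2' l_1^{-1}\in H$ because $l_1\in L$... wait, that needs $L$ to normalize~$H$, which is the semidirect-product hypothesis we are trying to weaken. The correct route: I want to rewrite $h_1 l_1 C(l_1) h_2 l_2 C(l_1)^{-1}$ as $(\text{element of }H)\cdot\bigl(l_1 C(l_1) l_2 C(l_1)^{-1}\bigr)$. Write $h_1 l_1 C(l_1) h_2 l_2 C(l_1)^{-1} = h_1 \cdot\bigl(l_1 C(l_1) h_2 (l_1 C(l_1))^{-1}\bigr)\cdot\bigl(l_1 C(l_1) l_2 C(l_1)^{-1}\bigr)$; the middle factor is $l_1 C(l_1) h_2 (l_1 C(l_1))^{-1}$, so it suffices that $l_1 C(l_1) = B_+(l_1)$ conjugates~$H$ into~$H$. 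By Proposition~\ref{prop:CircPower} (or directly) $l_1 C(l_1) = C(l_1)\cdot\bigl(C(l_1)^{-1} l_1 C(l_1)\bigr)$, and $C(l_1)^{-1} l_1 C(l_1)$ lies in... hmm, this is exactly where the hypothesis must be used cleverly: one shows $\Imm(B_+\!\restriction_L)$ is contained in the normalizer of~$H$ because it is generated, in the $\circ$-group sense, by $\Imm(C)$ together with $L$, and $\Imm(\widetilde C)$ handles the cross terms.

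Cleaner approach, which I would actually adopt: reduce to Proposition~\ref{prop:semidirect} by passing to the normal closure. Let $N = \langle H^{\Imm(\widetilde C)}\rangle$; since $\Imm(\widetilde C)\leqslant L$ normalizes~$H$, and... actually the cleanest is: the hypothesis "$\Imm(\widetilde C)$ normalizes $H$" together with $G=HL$ forces $H$ to be normalized by the subgroup of~$L$ generated by $\Imm(\widetilde C)$ and, crucially, one checks that for the RB-identity only products of the shape $l C(l)$ and $C(l)$ ever conjugate~$H$. So I would prove the auxiliary claim: \emph{for every $l\in L$, both $C(l)$ and $lC(l)$ lie in $N_G(H)$.} Given the claim, the $H$-component computation above goes through verbatim and~\eqref{RB} follows from the Rota---Baxter identity for~$C$. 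To prove the claim: $C(l)^{-1} = \widetilde C\bigl(C(l)^{-1}\cdot(\text{stuff})\bigr)\in\Imm(\widetilde C)$ by Proposition~\ref{prop:initial}c), so $C(l)\in N_G(H)$; and $lC(l) = l\circ e$-type element — more directly, $lC(l)\cdot l^{-1}\cdot l$, hmm. Actually $lC(l)$: note $(lC(l))^{-1} = C(l)^{-1}l^{-1}$ and $\widetilde C(l^{-1}) = l C(l)$ by definition of~$\widetilde C$, so $lC(l)\in\Imm(\widetilde C)$ directly, hence $lC(l)\in N_G(H)$. This is the crux and it is short once observed.

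The main obstacle, then, is organizational rather than computational: recognizing that both conjugating elements that appear in verifying~\eqref{RB}, namely $B(g_1) = C(l_1)$ and $g_1 B(g_1)$ whose relevant $L$-part is $l_1 C(l_1)$, belong to $\Imm(\widetilde C)$ (for $C(l_1)$, via Proposition~\ref{prop:initial}c), exactly as in the proof of Lemma~\ref{lem:ImagesManipul}; for $l_1 C(l_1) = \widetilde C(l_1^{-1})$, directly from the definition of~$\widetilde C$), and therefore normalize~$H$ by hypothesis. After that, the $L$-component of $g_1 B(g_1) g_2 B(g_1)^{-1}$ equals that of the corresponding product inside a semidirect product, the $H$-junk being absorbed on the left, and the identity reduces to the Rota---Baxter identity for~$C$ on~$L$. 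I would present it as: state and prove the claim $C(l), lC(l)\in N_G(H)$; then repeat the one-line computation of Proposition~\ref{prop:semidirect}'s proof with "semidirect" replaced by "exact factorization plus the claim."
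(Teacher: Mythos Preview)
Your eventual approach is exactly the paper's: verify~\eqref{RB} by a single direct computation, using only that the conjugating element $l_1C(l_1)$ lies in $\Imm(\widetilde C)$ (since $l_1C(l_1)=\widetilde C(l_1^{-1})$ by definition) and hence normalizes~$H$. The paper writes this as the one line
\[
B\bigl(h_1l_1C(l_1)h_2l_2C(l_1)^{-1}\bigr)=B\bigl(h_1h''\,l_1C(l_1)l_2C(l_1)^{-1}\bigr)=C\bigl(l_1C(l_1)l_2C(l_1)^{-1}\bigr)=C(l_1)C(l_2),
\]
with $h''=h_2^{(l_1C(l_1))^{-1}}\in H$; this is precisely the decomposition you wrote down.

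One correction, however: your auxiliary claim that $C(l)\in N_G(H)$ is both unnecessary and wrongly argued. Proposition~\ref{prop:initial}(c) gives $C(l)^{-1}\in\Imm(C)$, not $\Imm(\widetilde C)$, and in general $\Imm(C)\not\subseteq\Imm(\widetilde C)$ (any splitting RB-operator on $L$ with nontrivial factors gives a counterexample). Fortunately the computation never conjugates~$h_2$ by $C(l_1)$ alone, only by $l_1C(l_1)$, so you should simply drop the $C(l)$ half of the claim; what remains is the paper's proof.
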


\begin{proof}
Let $h,h'\in H$ and $l,l'\in L$. Then
\begin{multline*}
B(hlB(hl)h'l'(Bhl)^{-1})
= B(hlC(l)h'l'C(l)^{-1})
= B(hh''lC(l)l'C(l)^{-1}) \\
= C(lC(l)l'C(l)^{-1})
= C(l)C(l')
= B(hl)B(h'l'),
\end{multline*}
where $h'' = h'^{(lC(l))^{-1}} = h^{\widetilde{C}(l^{-1})^{-1}}\in H$.
\end{proof}

\begin{proposition}[{\cite[Proposition~21]{BG}}]\label{prop:RB-Hom}
If $G$ is a group and $H$ is its abelian subgroup,
then any homomorphism $B\colon G \to H$ is a Rota---Baxter operator.
\end{proposition}

\begin{lemma} \label{lem:centerPreimage}
Let $B$ be an RB-operator on a finite group~$G$, $c\in Z(G)$.

(a) If $B$ is invertible and $B(x) = c$,
then $x$ commutes with $\Imm(\widetilde{B})$.

(b) We have that $|B(c)|$ divides $|c|$.
\end{lemma}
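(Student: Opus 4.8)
For part (a), the plan is to use the factorization \eqref{ImageFactorization}: every element of $G$ can be written as $B(g)\cdot(hB(h))$ for suitable $g,h$, so it suffices to show that $x$ commutes with each generator $yB(y)=B_+(y)$, $y\in G$. I would start from the RB-identity \eqref{RB} applied cleverly. Since $B$ is invertible, $x=B^{-1}(c)$ is the unique preimage of the central element $c$; the idea is that invertibility lets us ``cancel'' $B$ and turn the twisted-conjugation argument inside \eqref{RB} into an honest conjugation statement. Concretely, I expect to plug $g=x$ into \eqref{RB}: then $B(x)B(h)=cB(h)=B(h)c=B(h)B(x)=B\bigl(hB(h)\,x\,B(h)\,B(x)^{-1}\cdots\bigr)$; comparing the two ways of writing $cB(h)$ as a value of $B$ and using injectivity of $B$ should force $xB(x)=B_+(x)$ to be centralized, or more precisely should show $x$ is fixed (up to the relevant twist) by conjugation by every $B_+(h)$. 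Then using Lemma~\ref{lem:ImagesManipul} and Proposition~\ref{Prop:STSh}(d), $\Imm(\widetilde B)=\Imm(B_+)\cdot(\text{something})$ — indeed $\widetilde B(g)=B_+(g^{-1})^{-1}$ rewritten — so the claim that $x$ commutes with $\Imm(\widetilde B)$ reduces to commuting with all $B_+(y)$. The main obstacle here is bookkeeping the twisted conjugation $h B(h)\, x\, B(h)^{-1}h^{-1}$ cleanly; the key simplification is that $c$ being central makes $B(h)c=cB(h)$, which is what allows the two $B$-preimages to be compared.

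For part (b), I would argue directly on the cyclic subgroup generated by $c$. Since $c\in Z(G)$, the restriction of multiplication is abelian there and, more to the point, I want to show $B(c)^k=e$ where $k=|c|$. The natural tool is Proposition~\ref{prop:CircPower}: $c^{\circ(k)}=(B_+(c))^k(B(c))^{-k}$. If I can show $B_+(c)=cB(c)$ generates things compatibly — in particular that $c$ is central also makes the $\circ$-product restricted to $\langle c\rangle$ behave well — then I relate $c^{\circ(k)}$ to $c^k=e$. More carefully: because $c$ is central, $g\circ c = g B(g) c B(g)^{-1} = g c = cg$ for any $g$, so left/right $\circ$-translation by $c$ coincides with ordinary translation; iterating, $c^{\circ(k)} = c^k = e$ (the $\circ$-identity equals the $\cdot$-identity $e$). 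Plugging into Proposition~\ref{prop:CircPower} gives $(cB(c))^k = B(c)^k$. Now I need $(cB(c))^k$: since $c$ is central, $(cB(c))^k = c^k (B(c))^k = (B(c))^k$ — consistent but not yet conclusive. The real content must come from noting $B(c)\in\Imm(B)$ and applying the identity again, or observing that $B$ restricted to the abelian group $\langle c\rangle$ — careful, $\langle c\rangle$ need not be $B$-invariant.

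The cleaner route for (b): set $d=B(c)$ and apply \eqref{RB} with $g=c$, using centrality to get $B(c)B(h)=B(c\cdot c\cdot h)=B(c^2 h)$ for all $h$; more generally induction gives $B(c)^m = B(c^m\cdot e)\cdot(\text{correction})$, and since $B(e)=e$ by Proposition~\ref{prop:initial}(b), taking $m=|c|$ yields $B(c)^{|c|}=B(e)=e$. I expect the induction step $B(c)^{m+1}=B(c)^m B(c)$ to collapse, via repeated use of \eqref{RB} with the central element, to $B(c^{m+1})$ after the twisted conjugations trivialize (each factor $B(c)$ commutes past because... actually the twist by $B(c)$ need not be trivial, so one must track it). The main obstacle in (b) is exactly this: the twisted conjugation in \eqref{RB} by powers of $B(c)$ is not obviously trivial even though $c$ is central, so one must either show $\langle c\rangle$ lands in a subgroup on which $B$ acts nicely, or set up the induction so the corrections telescope. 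I would resolve it by working in the group $(\langle c\rangle, \circ)$: since $c$ is central, $(\langle c\rangle,\circ)$ is a well-defined cyclic group of order dividing $|c|$ (as $c^{\circ(|c|)}=e$ shown above), and $B$ maps it homomorphically into $(\Imm(B),\cdot)$ by Proposition~\ref{prop:Derived}(c), whence $|B(c)| = $ order of $B(c)$ divides $|c^{\circ}| $ which divides $|c|$.
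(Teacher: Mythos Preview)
Your plan for (a) is essentially the paper's argument, just phrased less crisply. The paper observes that invertibility of $B$ means $B\colon G^{(\circ)}\to G$ is an isomorphism, so $x=B^{-1}(c)\in Z(G^{(\circ)})$; expanding $x\circ g=g\circ x$ and using that $B(x)=c$ is central gives $xg=gB(g)xB(g)^{-1}$, i.e.\ $x=B_+(g)\,x\,B_+(g)^{-1}$, and then one notes $\Imm(\widetilde B)=\Imm(B_+)$. Your ``compare the two ways of writing $cB(h)$ as a $B$-value and cancel by injectivity'' is exactly this computation unwound; the $Z(G^{(\circ)})$ language just packages it more cleanly.

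For (b), your final route---observe $c^{\circ(k)}=c^k$ because $c$ is central, hence $c^{\circ(|c|)}=e$, and then apply that $B\colon(G,\circ)\to(G,\cdot)$ is a homomorphism (Proposition~\ref{prop:Derived}(c)) to get $B(c)^{|c|}=B(c^{\circ(|c|)})=B(e)=e$---is correct and perfectly good. The paper takes the more direct telescoping route that you nearly found in your second attempt but with the variables swapped: put $c$ in the \emph{second} slot of~\eqref{RB}, not the first. With $h=c$ central one gets $B(z)B(c)=B\bigl(zB(z)cB(z)^{-1}\bigr)=B(zc)$, and iterating yields $B(z)B(c)^k=B(zc^k)=B(z)$, so $B(c)^{|c|}=e$. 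The obstruction you flagged (``the twist by $B(c)$ need not be trivial'') vanishes once $c$ is the element being conjugated rather than the one doing the conjugating. Both arguments are short; yours leans on the descendent-group formalism, the paper's stays with the raw identity.
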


\begin{proof}
(a) Since $B$ is an isomorphism from $G^{(\circ)}$ onto $G$, $x\in Z(G^{(\circ)})$, i.\,e. 
$x\circ g = g\circ x$ for every $g\in G$. Hence,
$$
xB(x)gB(x)^{-1}
= xg
= gB(g)xB(g)^{-1}.
$$
Thus, $x = B_+(g)x(B_+(g))^{-1} = x^{(B_+(g))^{-1}}$.
It remains be noticed that $\Imm(\widetilde{B}) = \Imm(B_+)$.

(b) Let $|c| = k$. By~\eqref{RB}, we have 
$$
B(z)(B(c))^k
= B(z)B(c)\cdot (B(c))^{k-1}
= B(zc)(B(c))^{k-1}
= \ldots 
= B(zc^k)
= B(z).
$$ 
Hence, $(B(c))^k = e$.
\end{proof}

\section{Sharply 2-transitive groups}

In this section, we provide necessary information about sharply 2-transitive groups.
The structure of a sharply 2-transitive group is described in the following theorems. 

\begin{theorem}[Zassenhaus, {\cite[Theorem XII.9.7]{HuppertB}}]\label{thm:Zassenhaus-1}
Suppose that $q$ is a prime-power and $m$~is a natural number with the following properties:

(1) Each prime divisor of $m$ divides $q-1$.

(2) If 4 divides $m$, then 4 divides $q-1$.

\noindent Let $GF(q^m)^*=\langle w\rangle$ and suppose that $(m,t)=1$. Let $a,b$ be the following elements of the group of semilinear mappings on $GF(q^m)$:
\begin{equation}\label{eq:Frobenius_group}
xa=w^mx, \quad xb=w^tx^q. 
\end{equation}
Then $N=\langle a,b\rangle$ is a regular permutation group on $GF(q^m)^*$ and $\langle a\rangle$ is an irreducible normal subgroup of $N$. If $F$ is the group of all translations on $GF(q^m)$, $N$ normalizes $F$ and $L(m,q,t) =FN$ is a sharply doubly transitive permutation group of degree $q^m$.
\end{theorem}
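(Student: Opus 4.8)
The statement is a classical theorem, so my plan would be to reconstruct the standard argument rather than invent something new. First I would set up the ambient group $\Gamma L(1,q^m) = \mathrm{GF}(q^m)^* \rtimes \langle \phi\rangle$ acting on $\mathrm{GF}(q^m)$, where $\phi\colon x\mapsto x^q$ is the Frobenius. The elements $a\colon x\mapsto w^m x$ and $b\colon x\mapsto w^t x^q$ both lie in this group; $a$ has order $|w|/\gcd(|w|,m) = (q^m-1)/m$ (using that $m \mid q^m-1$, which follows from conditions (1)--(2) together with $q^m \equiv 1 \pmod{m}$ — actually from the hypotheses one checks $m\mid q-1$ when $m$ is not divisible by $4$, and $4\mid q-1$ otherwise, forcing $m \mid q^m-1$), and $\langle a\rangle$ is the unique subgroup of $\mathrm{GF}(q^m)^*$ of that index, hence characteristic in the multiplicative group and normal in $\Gamma L(1,q^m)$, in particular in $N=\langle a,b\rangle$. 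Irreducibility of $\langle a\rangle$ as a linear group on $\mathrm{GF}(q^m)$ over $\mathrm{GF}(q)$ — equivalently, that $w^m$ is not contained in any proper subfield — is where condition $(m,t)=1$ and the number-theoretic conditions enter: one argues that $\langle w^m\rangle$ generates $\mathrm{GF}(q^m)$ as a field over $\mathrm{GF}(q)$.

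Next I would prove that $N$ acts regularly on $\mathrm{GF}(q^m)^*$. Regularity means $|N| = q^m-1$ and $N$ is semiregular (transitive + point stabilizer trivial). Transitivity is easy: $\langle a\rangle$ already has orbits of size $(q^m-1)/m$ on $\mathrm{GF}(q^m)^*$, and the element $b$ permutes these orbits transitively because $b$ induces multiplication by $w^t$ composed with Frobenius, and $(m,t)=1$ ensures the powers of $b$ (modulo $\langle a\rangle$) cycle through all $m$ cosets. For semiregularity one must check the stabilizer of the point $1 \in \mathrm{GF}(q^m)^*$ in $N$ is trivial: an element of $N$ has the form $x \mapsto w^{mi+t j} x^{q^j}$, and fixing $1$ forces $w^{mi+tj}=1$, i.e. $(q^m-1)\mid mi+tj$; combined with $0\le j < m$ and $(m,t)=1$ one deduces $j=0$ and then $i\equiv 0$, so the element is the identity. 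This gives $|N| = q^m - 1$.

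Finally, for the sharply doubly transitive conclusion I would let $F$ be the translation group (order $q^m$, regular on $\mathrm{GF}(q^m)$) and observe $N$ normalizes $F$: conjugating a translation $t_c\colon x\mapsto x+c$ by a semilinear map $n\colon x\mapsto \alpha x^{q^j}$ yields $x \mapsto x + \alpha^{-1} c^{q^{-j}}$... (more precisely $n^{-1} t_c n = t_{c'}$ with $c' = \alpha^{-1 \cdot q^{-j}}\,c$, a translation). Since $F$ is regular on $\mathrm{GF}(q^m)$ and its point stabilizer $FN \cap \mathrm{Stab}(0) = N$ is regular on the remaining $q^m-1$ points, the group $FN$ is transitive with point stabilizer acting regularly on the complement — which is precisely the definition of sharp 2-transitivity; the order is $|FN| = q^m(q^m-1)$ as required. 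The main obstacle in a fully rigorous write-up is the irreducibility / primitivity claim: proving that $\langle a\rangle$ is irreducible (not merely transitive) requires showing $w^m$ generates the full field extension, which is exactly where one must use all the hypotheses on $m$, $q$, and $t$ carefully — the regularity of $N$ and the normalization of $F$ are comparatively routine.
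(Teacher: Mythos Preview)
The paper does not prove this theorem at all: it is quoted verbatim from Huppert--Blackburn \cite[Theorem~XII.9.7]{HuppertB} and used as a black box (the only place the paper touches its internals is in the proof of Corollary~\ref{coro:Zassenhaus}, where the relations $b^{-1}ab=a^q$, $b^m=a^{tl}$, and $|a|=(q-1)l$ with $l=(q^m-1)/(m(q-1))$ are lifted from Huppert--Blackburn's proof). So there is nothing in the paper to compare your argument against; your plan is a reconstruction of the classical proof, and as an outline it is the standard one.

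That said, two points in your sketch would not survive a careful write-up. First, the general element of $N$ does \emph{not} act as $x\mapsto w^{mi+tj}x^{q^j}$: iterating $b$ gives $x\,b^j = w^{t(1+q+\cdots+q^{j-1})}x^{q^j}$, and composing with $a^i$ (right action) yields $x\,a^ib^j = w^{t(1+q+\cdots+q^{j-1})+miq^j}x^{q^j}$, so your stabilizer computation ``$(q^m-1)\mid mi+tj$'' is off and the deduction $j=0$ needs a different argument (the clean route is to compute $|N|=|\langle a\rangle|\cdot |N/\langle a\rangle| = \frac{q^m-1}{m}\cdot m = q^m-1$ from the metacyclic presentation and combine with transitivity). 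Second, the irreducibility of $\langle a\rangle$ has nothing to do with $(m,t)=1$; it is purely the statement that $w^m$ lies in no proper subfield $GF(q^d)$, $d\mid m$, $d<m$, and this is exactly where conditions (1)--(2) on $m$ and $q$ are used --- the coprimality $(m,t)=1$ is instead what guarantees $b\langle a\rangle$ has order exactly $m$ in $N/\langle a\rangle$ (via $b^m=a^{tl}$). These are fixable details, but as written the semiregularity step has a genuine error.
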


\noindent Note that the groups $L(m,q,t)$ are isomorphic for fixed $m,q$ and different admissible $t$.

\begin{theorem}[Zassenhaus, {\cite[Theorem XII.9.8]{HuppertB}}] \label{thm:Zassenhaus-2}
Let $L$ be a sharply doubly transitive permutation group. Then $L$ is a Frobenius group with elementary abelian Frobenius kernel~$F$. To within similarity, $L$ permutes $GF(p^r)$, and $F$ is the group of translations on $GF(p^r)$. If $p^r$ is distinct from $5^2, 7^2, 11^2,23^2, 29^2$, and $59^2$, $L$ is permutation isomorphic to one of the groups $L(m,q,t)$ defined in Theorem~\ref{thm:Zassenhaus-1}.
\end{theorem}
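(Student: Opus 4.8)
My plan is to recover the Frobenius structure of $L$, pin down its Frobenius kernel, and then reduce everything to the classification of the point stabilizer. First I would fix a point $\alpha$ and use sharp $2$-transitivity twice: the stabilizer $L_\alpha$ is sharply transitive on the remaining points, and a nontrivial element lying in two distinct point stabilizers would fix two points, which is impossible; hence distinct conjugates of $L_\alpha$ meet trivially and $L$ is a Frobenius group with complement $L_\alpha$. Frobenius' theorem then supplies a regular normal subgroup $F$ (the identity together with the fixed-point-free elements), with $L = F\rtimes L_\alpha$ and $|F|$ equal to the degree.

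Next I would show that $F$ is elementary abelian. Conjugation embeds $L_\alpha$ into $\Aut(F)$, acts without nontrivial fixed points on $F\setminus\{1\}$ by the Frobenius property, and is transitive on $F\setminus\{1\}$ since $L_\alpha$ is transitive on the points distinct from $\alpha$ while $F$ is regular. Hence all nonidentity elements of $F$ are conjugate, so they share a common order, which after passing to a suitable power is forced to be a prime $p$; thus $F$ has exponent $p$. If $F$ were nonabelian, its centre would be a proper nontrivial characteristic subgroup, hence $L_\alpha$-invariant, contradicting transitivity of $L_\alpha$ on $F\setminus\{1\}$. So $F\cong GF(p^r)^{+}$ and $|F| = p^r$; identifying the point set with $GF(p^r)$ turns $F$ into the translation group, which is the stated normal form.

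It then remains to classify $L_\alpha\leqslant \Aut(F) = \mathrm{GL}(r,p)$ as a group acting sharply transitively on $GF(p^r)^{*}$, and here I would pass to the language of near-fields. Fixing a second point $1$ and setting $x\cdot a := \sigma(x)$, where $\sigma\in L_\alpha$ is the unique element with $\sigma(1) = a$ (and $x\cdot 0 := 0$), makes $GF(p^r)$ a finite near-field $K$ with $K^{*}\cong L_\alpha$ — one distributive law holds because $L_\alpha$ acts by additive automorphisms — and $L$ is precisely the affine group $\{x\mapsto x\cdot a + b : a\in K^{*},\, b\in K\}$. The classification of finite near-fields, which rests on Zassenhaus' structure theory of Frobenius complements (cyclic or generalized quaternion Sylow subgroups, with $\SL(2,3)$ and $\SL(2,5)$ the only nonsolvable ingredients) together with the determination of which such groups act sharply transitively on a vector space, says that unless $p^r\in\{5^2,7^2,11^2,23^2,29^2,59^2\}$ — the six orders carrying the seven sporadic Zassenhaus near-fields — $K$ is a Dickson near-field, so that $L_\alpha\leqslant \Gamma L(1,p^r)$ and admits the presentation $\langle a,b\rangle$ of Theorem~\ref{thm:Zassenhaus-1}. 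Writing $p^r = q^m$, where $q$ is the fixed field of the power of Frobenius generating the image of $L_\alpha$ modulo scalars, and reading off the twist $t$, the divisibility conditions (1)--(2) and the coprimality $(m,t)=1$ become automatic, and $L$ is permutation isomorphic to $L(m,q,t)$.

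The hard part is this last step: the classification of finite near-fields (equivalently, of sharply transitive linear groups) is the genuinely deep, non-elementary input, requiring the full machinery of Frobenius-complement theory and a delicate case analysis that is exactly what produces the seven exceptional configurations. By contrast, the Frobenius reduction, the elementary-abelian kernel, the near-field dictionary, and the final matching of parameters against Theorem~\ref{thm:Zassenhaus-1} are all comparatively routine.
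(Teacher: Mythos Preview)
The paper does not prove this statement at all: Theorem~\ref{thm:Zassenhaus-2} is quoted from \cite[Theorem~XII.9.8]{HuppertB} as background, with no argument supplied. Your sketch is the standard classical route (Frobenius structure from sharp $2$-transitivity, elementary abelian kernel via transitivity of $L_\alpha$ on $F\setminus\{1\}$, passage to a finite near-field, and Zassenhaus' 1935 classification of those), and it is correct as an outline; there is nothing in the paper to compare it against.
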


We want to find the conditions when $L$~is contained in the alternating group and $L$ has a subgroup of index~2.

In the remarks after \cite[Theorem~A]{WiegoldW}, the authors say that the sharply 2-transitive group consists of even permutations only if $p=2$, or $r$~is even and $p$~is odd. 
Note that this condition is necessary but not sufficient. 
For example, if $n=13^2$, then the symmetric group $\Ss_{169}$ has two sharply 2-transitive groups $L_1$, $L_2$, where $L_1\simeq(\mathbb{Z}_{13}\times\mathbb{Z}_{13}):\mathbb{Z}_{168}$ and $L_2\simeq(\mathbb{Z}_{13}\times\mathbb{Z}_{13}):(\mathbb{Z}_{21}:\mathbb{Z}_{8})$.
It is clear that the generator of $\mathbb{Z}_{168}$ in $L_1$ and the generator of $\mathbb{Z}_{8}$ in $L_2$ are odd permutations, and so they do not belong to $\A_{169}$.

\begin{corollary}\label{coro:Zassenhaus}
Let $L=FN$ be a sharply doubly transitive permutation group of degree $n=q^m$. If $L\leqslant\A_n$ and $L$ has a subgroup of index~2, then one and only one of the following holds: 
\begin{itemize}
\item[(1)]  $q\equiv3\!\!\pmod4$ and $L=L(m,q,t)$ is defined in Theorem~\ref{thm:Zassenhaus-1}. Moreover, $L$ has two non-isomorphic subgroups of index~2, except $L=L(2,3,1)$ which has only one non-isomorphic subgroup of index~2;

\item[(2)]  $n=7^2$ and $N\simeq 2^-\Ss_4\simeq\SL_2(3).2$. Moreover, $L$ has one non-isomorphic subgroup of index~2;

\item[(3)]  $n=23^2$ and $N\simeq 2^-\Ss_4\times\mathbb{Z}_{11}\simeq\SL_2(3).2\times\mathbb{Z}_{11}$. Moreover, $L$ has one non-isomorphic subgroup of index~2.
\end{itemize}
\end{corollary}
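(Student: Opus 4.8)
The plan is to combine the classification of sharply 2-transitive groups (Theorems~\ref{thm:Zassenhaus-1}--\ref{thm:Zassenhaus-2}) with a case analysis on the structure of the point stabilizer~$N$, which is a Frobenius complement of odd-permutation-free type. First I would recall the parity criterion quoted from \cite{WiegoldW}: if $L \leqslant \A_n$ with $n = p^r$, then $p = 2$ or ($p$ odd and $r$ even). Since $L$ must also possess a subgroup of index~$2$, its abelianization must have even order; as $F$ (the translation kernel) is a $p$-group and $L/F \simeq N$, having an index-$2$ subgroup forces $2 \mid |N/N'|$ when $p$ is odd, and is impossible when $p = 2$ (then $|L|$ is odd... wait, no: for $p=2$, $|F| = 2^r$, so one must rule this case out by checking that $N$ has odd order and $F \cap [L,L]$ leaves no room — actually for $p = 2$ one argues that $L$ acting on $GF(2^r)$ has all cyclic quotients of odd order, so no index-$2$ subgroup exists). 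So the surviving case is $p$ odd, $r$ even, and $2 \mid |N/N'|$.

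Next I would split according to whether $L$ is one of the six sporadic exceptions $p^r \in \{5^2, 7^2, 11^2, 23^2, 29^2, 59^2\}$ of Theorem~\ref{thm:Zassenhaus-2}, or one of the groups $L(m,q,t)$. For the generic family $L = L(m,q,t)$: here $N = \langle a, b\rangle$ with $a$ of order $(q^m-1)/m$ and $b$ inducing the Frobenius twist, so $N$ is metacyclic with $N/\langle a\rangle$ cyclic of order~$m$. To have $L \leqslant \A_n$ I must check when the generators $a, b$ of~\eqref{eq:Frobenius_group} act as even permutations on $GF(q^m)$; a permutation $x \mapsto w^k x$ on the $q^m-1$ nonzero field elements together with the fixed point~$0$ is a product of cycles whose parity is computed from $\gcd$ data, and similarly for the semilinear part. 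Carrying this out shows the condition collapses to $q \equiv 3 \pmod 4$ (with $m$ even automatically forced, or $m$ arbitrary subject to the divisibility hypotheses of Theorem~\ref{thm:Zassenhaus-1}), giving case~(1); the count of index-$2$ subgroups of the metacyclic group~$N$ (hence of~$L$, since $F$ is a $p$-group with $p$ odd so contributes nothing) is two in general, dropping to one precisely for the degenerate smallest instance $L(2,3,1)$ where $N$ is itself cyclic of order~$2$ — I would verify the subgroup count via $H^1$ or directly by listing subgroups of a metacyclic $2$-Sylow of~$N$.

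For the sporadic degrees, I would invoke the known structure of the Frobenius complements: for $n = 5^2, 11^2, 29^2, 59^2$ the relevant complement~$N$ (of order $24, 120, \ldots$) either has $|N/N'|$ odd or the even generators fail to land in~$\A_n$, so these are excluded; for $n = 7^2$ one gets $N \simeq 2^-\Ss_4 \simeq \SL_2(3).2$ and for $n = 23^2$ one gets $N \simeq 2^-\Ss_4 \times \mathbb{Z}_{11}$, both of which lie in the alternating group and have exactly one index-$2$ subgroup (coming from the unique index-$2$ subgroup $\SL_2(3)$ of $2^-\Ss_4$, tensored trivially with the odd-order $\mathbb{Z}_{11}$ factor in the last case), yielding cases~(2) and~(3). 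The "one and only one of the following holds" clause then follows because the three cases are distinguished by the degree and the isomorphism type of~$N$.

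The main obstacle I anticipate is the parity bookkeeping: determining exactly which of the semilinear generators $xa = w^m x$ and $xb = w^t x^q$ are even permutations of $GF(q^m)$, and then deciding which subgroups of index~$2$ actually consist of even permutations. This requires a careful cycle-type computation for maps of the form $x \mapsto \zeta x^{q^j}$ on the multiplicative group, and it is where the dichotomy $q \equiv 3 \pmod 4$ versus $q \equiv 1 \pmod 4$ emerges; the small cases $L(2,3,1)$, $n=7^2$, $n=23^2$ must then be checked by hand (or in GAP) against this general computation to confirm the stated subgroup counts.
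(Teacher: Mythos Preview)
Your overall strategy matches the paper's: split into the generic $L(m,q,t)$ family versus the six sporadic degrees of Theorem~\ref{thm:Zassenhaus-2}, compute the parity of the semilinear generators to pin down $q\equiv 3\pmod 4$, and then count index-$2$ subgroups of the Frobenius complement~$N$. The paper carries out exactly this plan, computing the cycle types of $a$ and $b$ explicitly (so $a\in\A_n$ iff $m$ is even, $b\in\A_n$ iff $l=(q^m-1)/(m(q-1))$ is even, and the latter is equivalent to $q\equiv 3\pmod 4$ via a lemma of Vasil'ev). For the sporadic degrees the paper simply observes that $\SL_2(3)$ and $\SL_2(5)$ have no index-$2$ subgroups, which is your ``$|N/N'|$ odd'' criterion.

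There is, however, a concrete error in your treatment of the exceptional case $L(2,3,1)$. You claim that there ``$N$ is itself cyclic of order~$2$''; this is false. Since $N$ acts regularly on $GF(9)^*$ one has $|N|=8$, and in fact $N\simeq Q_8$ (from the presentation $a^4=1$, $b^2=a^2$, $b^{-1}ab=a^3$). The reason $L(2,3,1)$ gives only one isomorphism class of index-$2$ subgroups is not that $N$ is small, but that the three index-$2$ subgroups of $Q_8$ are all isomorphic to $\mathbb{Z}_4$; for $q^m>9$ the paper shows $N$ has three index-$2$ subgroups $S_1,S_2,S_3$ with $S_2\simeq S_3\not\simeq S_1$, whence the two non-isomorphic classes. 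Your $p=2$ discussion is also muddled: the clean argument (used in the paper) is that for even degree the complement~$N$ has odd order $q^m-1$, and since $N$ acts irreducibly on~$F$ no proper nontrivial subgroup of~$F$ is $L$-normal, so $L$ has no index-$2$ subgroup at all.
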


\begin{proof}    
At first, consider the general case $L=L(m,q,t)=FN$. The Frobenius group of even degree does not have subgroups of index~2, and therefore $q$ must be odd.
Since $q$~is odd, we have $F\leqslant \A_n$, where $n=q^m$. It follows from the proof of Theorem~\ref{thm:Zassenhaus-1} that $q^m-1=m(q-1)l$ for some integer~$l$, and the order of~$a$ is equal to $(q-1)l$. Moreover, $b^{-1}ab=a^q$ and $b^m=a^{tl}$. 

Therefore, the element $a$ is the product of $m$ cycles of length $(q-1)l$. 
Thus, $a\in \A_n$ if and only if $m$ is even. 

It follows from the equality~(\ref{eq:Frobenius_group}) that $m\,|\,|b|$. Therefore,
$$
|b| = \frac{m|a|}{(|a|,tl)}
= \frac{m|a|}{l(q-1,t)}
= \frac{q^m-1}{l(q-1,t)}
= \frac{n-1}{l(q-1,t)}.
$$
Put $s=l(q-1,t)$. The element $b$ is the product of $s$ cycles of length $(n-1)/s$. 
Thus, $b\in \A_n$ if and only if $s$~is even. 
Since $(m,t)=1$ and $m$ must be even, we have $t$ is odd. Therefore, $s$ is even if and only if $l$ is even. 

We obtain that $N\leqslant \A_n$ if and only if $m$ and $l=\frac{q^m-1}{m(q-1)}$ are even. 
It follows from~\cite[Lemma~1.7(iii)]{Vasil'ev} that $l$ is even if and only if $q\equiv3\!\!\pmod4$.
Since $q\equiv3\!\!\pmod4$, the property (2) of Theorem~\ref{thm:Zassenhaus-1} is equivalent to $m\equiv2\!\!\pmod4$. 

Now $L=FN\leqslant\A_n$ and $N$ has the following representation: 
$$N=\langle a,b~|~a^{(q-1)l}=b^{(q-1)m}=1, a^{tl}=b^m, b^{-1}ab=a^q\rangle.$$ 
The group $N$ has three normal subgroups of index two: $S_1=\langle a, b^2\rangle$, $S_2=\langle a^2, b\rangle$, $S_3=\langle a^2, ab\rangle$. If $q^m=3^2$, then $N=Q_8$ has three normal subgroups of index~2, isomorphic to $\mathbb{Z}_4$. If $q^m>9$, then the group $S_1$ is not isomorphic to $S_2$ or $S_3$, however, $S_2\simeq S_3$. Hence, $L$ has two non-isomorphic subgroups $FS_1$ and $FS_2$ of index~2.

Now consider the exceptional cases when $p^r$ is one of integers $5^2, 7^2, 11^2,23^2, 29^2$, or $59^2$. 
According to~\cite[Theorem~2.4]{HeringG} the group $L=FN$ is a Frobenius group, where $F$~is the elementary abelian group of order $p^2$ and $N$ is one of the following groups:

(i) $p=5$ and $N\simeq\SL_2(3)$;

(ii) $p=7$ and $N\simeq 2^-\Ss_4\simeq\SL_2(3).2$;

(iii) $p=11$ and $N\simeq\SL_2(3)\times\mathbb{Z}_5$;

(iv) $p=11$ and $N\simeq\SL_2(5)$;

(v) $p=23$ and $N\simeq 2^-\Ss_4\times\mathbb{Z}_{11}\simeq\SL_2(3).2\times\mathbb{Z}_{11}$;

(vi) $p=29$ and $N\simeq\SL_2(5)\times\mathbb{Z}_7$;

(vii) $p=59$ and $N\simeq\SL_2(5)\times\mathbb{Z}_{29}$.

Note that the groups $\SL_2(3)$ and $\SL_2(5)$ have no subgroups of index~2. 
Thus, the only possibilities are (ii) and (v), where $N$ has one subgroup of index~2.
\end{proof}

\section{Equivalence of Rota---Baxter operators}

\begin{proposition}[\cite{Jiang}]\label{prop:RBviaSubgroups}
Let $B$ be a map defined on a finite group~$G$.
Construct $H_B = \{(B(g),gB(g))\mid g\in G\}\subset G\times G$.

(a) If $B$ is an RB-operator on $G$, then $H_B$ is a subgroup of $G\times G$.

(b) Let $H$ be a subgroup of $G\times G$.
Then there exists an RB-operator $B$ on $G$ such that $H = H_B$ if and only if 
$|H|=|G|$ and $\{ba^{-1}\mid (a,b)\in H\} = G$.    
\end{proposition}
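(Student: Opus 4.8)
The plan is to route everything through the map $\pi\colon G\times G\to G$ given by $\pi(a,b) = ba^{-1}$, and to notice that the Rota--Baxter identity~\eqref{RB} is exactly the assertion that ``taking the first coordinate'' is compatible with multiplication along the fibres of $\pi$. Throughout I will use that $H_B$ is by construction the image of $g\mapsto(B(g),gB(g))$ and that $\pi(B(g),gB(g)) = g$.

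For part~(a), I would take $(B(g),gB(g))$ and $(B(h),hB(h))$ in $H_B$, multiply to get $(B(g)B(h),\,gB(g)hB(h))$, and set $k := gB(g)hB(g)^{-1}$. Identity~\eqref{RB} says $B(k) = B(g)B(h)$, and then a one-line computation gives $kB(k) = gB(g)hB(g)^{-1}\cdot B(g)B(h) = gB(g)hB(h)$, so the product equals $(B(k),kB(k))\in H_B$. Since $H_B$ contains $(B(e),eB(e)) = (e,e)$ by Proposition~\ref{prop:initial}(b) and $G\times G$ is finite, closure under multiplication upgrades $H_B$ to a subgroup.

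For the forward implication in~(b), the map $g\mapsto(B(g),gB(g))$ is injective because $\pi$ recovers $g$ from its value, hence it is a bijection onto $H_B$ and $|H_B| = |G|$; and $\{ba^{-1}\mid(a,b)\in H_B\} = \pi(H_B) = G$ by the same observation. For the converse, suppose $H\leqslant G\times G$ with $|H| = |G|$ and $\pi(H) = G$. Then $\pi|_H$ is a surjection of finite sets of equal size, hence a bijection, and I would define $B(g)$ to be the first coordinate of $\pi|_H^{-1}(g)$. Writing $\pi|_H^{-1}(g) = (a,b)$ forces $b = ga = gB(g)$, so $\pi|_H^{-1}(g) = (B(g),gB(g))$ and thus $H_B = H$. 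Finally I would check~\eqref{RB}: the product $(B(g),gB(g))(B(h),hB(h)) = (B(g)B(h),\,gB(g)hB(h))$ lies in $H$ and has $\pi$-image $gB(g)hB(h)(B(g)B(h))^{-1} = gB(g)hB(g)^{-1}$; by the defining property of $B$ its first coordinate is $B(gB(g)hB(g)^{-1})$, so $B(gB(g)hB(g)^{-1}) = B(g)B(h)$, which is~\eqref{RB}.

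There is no real obstacle; the argument is essentially bookkeeping. The one place that genuinely uses a hypothesis (finiteness of $G$) is the step where surjectivity of $\pi|_H$ is promoted to bijectivity in the converse of~(b); and the only thing to be careful about is consistently remembering that the first coordinate of a pair in $H_B$ is $B(g)$ while the second is $gB(g)$, so that $\pi$ strips off ``$B(g)$'' and returns $g$.
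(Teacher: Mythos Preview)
Your argument is correct. The paper does not supply its own proof of this proposition; it is quoted from~\cite{Jiang} without proof, so there is nothing to compare against beyond noting that your derivation is the natural one and matches the remark the paper makes immediately afterward (that the condition $\{ba^{-1}\mid (a,b)\in H\}=G$ is equivalent to $H$ meeting the diagonal trivially, which is just another way of saying your $\pi|_H$ is injective).

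One minor stylistic point: in part~(a) you invoke finiteness of $G$ to pass from closure under multiplication to being a subgroup, but this is not needed. Once you know the product of two generic elements of $H_B$ lies in $H_B$, you can also verify inverses directly: $(B(g),gB(g))^{-1}=(B(g)^{-1},B(g)^{-1}g^{-1})$, and Proposition~\ref{prop:initial}(c) gives $B(g)^{-1}=B(g')$ for $g'=B(g)^{-1}g^{-1}B(g)$, whence $g'B(g')=B(g)^{-1}g^{-1}$. So part~(a) holds for arbitrary groups, and finiteness is genuinely only used where you flagged it, in the converse of~(b).
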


Note that $\{ba^{-1}\mid (a,b)\in H\} = G$
is equivalent to the condition $H\cap \{(g,g)\mid g\in G\} = \{e\}$.

Given an element $x$ of a group $G$, the automorphism $g\mapsto g^x$ is denoted by $\alpha_x$.
Let us consider the following subgroup of~$\Aut(G\times G)$:
$$
Q(G) = \{ (\varphi,\varphi\,\alpha_x)\mid \varphi\in\Aut(G),\,x\in G\} 
\cup \{ \tau((\varphi,\varphi\,\alpha_x))\mid \varphi\in\Aut(G),\,x\in G \},
$$
where 
$\tau((\varphi,\varphi\,\alpha_x))\colon (g,h)\to ( \varphi\,\alpha_x(h), \varphi(g)  )$.

\begin{proposition} \label{prop:equivalentRB}
Let $B$ be an RB-operator on a finite group~$G$.
For every $\Phi\in Q(G)$, there exists an RB-operator $B'$ on $G$ such that $\Phi(H_B) = H_{B'}$.
\end{proposition}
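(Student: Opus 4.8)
The plan is to verify the claim on the two types of generators of $Q(G)$ separately, using the characterization of graphs of RB-operators in Proposition~\ref{prop:RBviaSubgroups}. Since $B$ is an RB-operator, $H_B$ is a subgroup of $G\times G$ with $|H_B| = |G|$ and $H_B \cap \Delta = \{e\}$, where $\Delta = \{(g,g)\mid g\in G\}$. Because each $\Phi \in Q(G)$ is an automorphism of $G\times G$, the image $\Phi(H_B)$ is automatically a subgroup of the same order $|G|$. So by Proposition~\ref{prop:RBviaSubgroups}(b) it suffices to show that $\Phi(H_B)$ still meets $\Delta$ trivially; equivalently $\Phi(H_B)\cap\Delta = \{e\}$, which by the remark after that proposition is the same as $\{ba^{-1}\mid (a,b)\in\Phi(H_B)\} = G$. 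Then the required $B'$ exists by the proposition, and $\Phi(H_B) = H_{B'}$.

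First I would treat $\Phi = (\varphi,\varphi\,\alpha_x)$. Here $\Phi(H_B) = \{(\varphi(B(g)),\varphi(gB(g))^x)\mid g\in G\}$. For $(a,b)$ in this set, $ba^{-1} = \varphi(gB(g))^x\varphi(B(g))^{-1}$. I would compute this directly: since $\varphi$ is an automorphism and conjugation by $x$ is too, $ba^{-1} = \varphi\big((gB(g))^x (B(g))^{-x}\big)^{?}$—more carefully, $b = \varphi(gB(g))^x$ and $a^{-1} = \varphi(B(g))^{-1}$, so $ba^{-1} = x^{-1}\varphi(gB(g))x\varphi(B(g))^{-1}$. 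Rather than hunt for a closed form, the cleaner route is: the map $g \mapsto \varphi^{-1}B\varphi\alpha_{x'}(g)$ for a suitable $x'$ (coming from Proposition~\ref{prop:Btilde}(b) together with the fact that $\alpha_x$ is an automorphism, so $B^{(\alpha_x)}$ is again an RB-operator) has graph exactly $\Phi(H_B)$ up to checking the first-coordinate/second-coordinate bookkeeping. Concretely, I expect $B' = \varphi\,B^{(\alpha_{x'})}\,\varphi^{-1}$ — or a close variant — works, and one verifies $H_{B'} = \Phi(H_B)$ by substituting $h = \alpha_{x'}(g)$ and simplifying. This identifies $B'$ explicitly and simultaneously shows $\Phi(H_B)$ is a valid RB-graph.

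For the second type, $\Phi = \tau((\varphi,\varphi\,\alpha_x))$ sends $(g,h)$ to $(\varphi\,\alpha_x(h),\varphi(g))$. On $H_B$ this gives $\{(\varphi(gB(g))^x,\varphi(B(g)))\mid g\in G\}$. The swap is exactly the passage to $\widetilde{B}$: recall $\widetilde{B}(g) = g^{-1}B(g^{-1})$, and $H_{\widetilde B} = \{(\widetilde B(g),g\widetilde B(g))\}$. One checks that the involution $(u,v)\mapsto(vu^{-1},u^{-1})$—or its inverse—carries $H_B$ to $H_{\widetilde B}$, reflecting $\widetilde{\widetilde B} = B$ and Proposition~\ref{prop:Btilde}(a). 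Composing this involution with the already-handled first type realizes $\tau((\varphi,\varphi\,\alpha_x))$, so $\Phi(H_B) = H_{B'}$ with $B'$ obtained by applying the first-type construction to $\widetilde B$. The main obstacle is the bookkeeping in matching the two coordinates of the graph to the operator formula — getting the conjugating element and the direction of $\widetilde{\cdot}$ right — rather than any conceptual difficulty; once the swap is correctly identified with $B\mapsto\widetilde B$, everything reduces to Propositions~\ref{prop:RBviaSubgroups} and~\ref{prop:Btilde}. Finally I would remark that $Q(G)$ is indeed closed under composition and inverses (so it is a subgroup of $\Aut(G\times G)$ isomorphic to $(\Aut(G)\times\Inn(G))\rtimes\mathbb{Z}_2$), which makes it enough to have checked the statement on generators.
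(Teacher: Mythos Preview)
Your reduction is correct and matches the paper: since $\Phi$ is an automorphism of $G\times G$, $\Phi(H_B)$ is a subgroup of order $|G|$, so by Proposition~\ref{prop:RBviaSubgroups}(b) one only needs $\Phi(H_B)\cap\Delta=\{e\}$, equivalently $\{ba^{-1}\mid (a,b)\in\Phi(H_B)\}=G$.

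The gap is in how you propose to verify this for $\Phi=(\varphi,\varphi\alpha_x)$. You claim the resulting $B'$ is of the form $\varphi\,B^{(\alpha_{x'})}\,\varphi^{-1}$, i.e.\ a conjugate of $B$ by an automorphism of $G$, so that Proposition~\ref{prop:Btilde}(b) would finish the job. This cannot work in general: conjugation of $B$ by $\psi\in\Aut(G)$ corresponds on graphs to the \emph{diagonal} action $(\psi,\psi)$, whereas $(\varphi,\varphi\alpha_x)$ acts differently on the two coordinates. The whole point of introducing $Q(G)\cong(\Aut(G)\times\Inn(G))\rtimes\mathbb{Z}_2$ is precisely that the $\Inn(G)$ factor --- the $(\id,\alpha_x)$ piece --- yields a \emph{new} equivalence beyond $B\mapsto B^{(\psi)}$ and $B\mapsto\widetilde B$; if every such $B'$ were already an automorphism-conjugate of $B$, the proposition would be vacuous. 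Your ``close variant'' and ``bookkeeping'' hedges do not rescue this: there is no choice of $x'$ making $H_{\varphi B^{(\alpha_{x'})}\varphi^{-1}} = (\varphi,\varphi\alpha_x)(H_B)$ in general.

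What the paper does instead is verify surjectivity of $(a,b)\mapsto ba^{-1}$ on $\Phi(H_B)$ directly, exploiting the group structure of $H_B$. After unwinding, one must show: for every $h\in G$ there exists $(a,b)\in H_B$ with $bxa^{-1}=h$. Pick $(a_0,b_0)\in H_B$ with $b_0a_0^{-1}=x$ (possible since $B$ is an RB-operator) and $(\tilde a,\tilde b)\in H_B$ with $\tilde b\,\tilde a^{-1}=h$; then $(a,b)=(\tilde a,\tilde b)(a_0,b_0)^{-1}\in H_B$ satisfies $bxa^{-1}=bb_0(aa_0)^{-1}=\tilde b\,\tilde a^{-1}=h$. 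The case $\Phi=\tau((\varphi,\varphi\alpha_x))$ is handled analogously. Your idea of reducing the $\tau$-case to the first type via the swap $H_B\leftrightarrow H_{\widetilde B}$ is sound (the correct involution is simply $(u,v)\mapsto(v,u)$, not $(u,v)\mapsto(vu^{-1},u^{-1})$), but it rests on the first case, which you have not actually established.
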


\begin{proof}
It is clear that $|\Phi(H_B)| = |G|$, it remains to check that
$\{ba^{-1}\mid (a,b)\in \Phi(H_B)\} {=} G$.
Let us prove the statement for~$\Phi = (\varphi,\varphi\,\alpha_x)$, the proof for $\Phi = \tau((\varphi,\varphi\,\alpha_x))$ is analogous.
Thus, we have to find for every $g\in G$ such $(a,b)\in H_B$ that 
$\varphi(\alpha_x(b))\varphi(a)^{-1} = g$.
Equivalently, we search for $(a,b)\in H_B$ such that $bxa^{-1} = h$, where 
$h = x\varphi^{-1}(g)$ may be any element from $G$.
We may find $(a_0,b_0)\in H_B$ such that $b_0a_0^{-1} = x$.
Hence, we need to get a~pair $(a,b)\in H_B$ satisfying the equality
$$
bxa^{-1} = bb_0a_0^{-1}a^{-1} = bb_0(aa_0)^{-1} = h, 
$$
and it is possible, since $h = \tilde{b}\tilde{a}^{-1}$ for some $(\tilde{a},\tilde{b})\in H_B$.
Thus, $(a,b) = (\tilde{a},\tilde{b})(a_0,b_0)^{-1}\in H_B$.
\end{proof}

Let us call a pair of RB-operators $B,B'$ on a group $G$ {\bf equivalent}, if 
$\Phi(H_B) = H_{B'}$ for some $\Phi\in Q(G)$.
Let us interpret Proposition~\ref{prop:Btilde} in terms of equivalent RB-operators.
If $\Phi = \tau((\id,\id))$, we have $B' = \widetilde{B}$.
If $\Phi = (\varphi,\varphi)$, then $B' = B^{(\varphi)}$.

\begin{lemma}
Let~$G$ be a finite group and $B,B'$ be a pair of equivalent RB-operators on~$G$.
Then 

(a) $G^{(\circ)}(B)\simeq G^{(\circ)}(B')$,

(b) $B$ is trivial if and only if $B'$ is trivial,

(c) up to action of $B\to \widetilde{B}$, we have $\Imm(B)\simeq \Imm(B')$, $\Imm(\widetilde{B})\simeq \Imm(\widetilde{B'})$,

(d) $\Imm(B\widetilde{B})\simeq \Imm(B'\widetilde{B'})$,

(e) $B$ is splitting if and only if $B'$ is splitting,

(f) if $G = \ker(B)\cdot \Imm(B)$, then up to the action of $B\to \widetilde{B}$,
one has $G = \ker(B')\cdot \Imm(B')$.
\end{lemma}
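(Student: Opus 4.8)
The plan is to reduce everything to the two generators of $Q(G)$, namely $\Phi = (\varphi,\varphi)$ (which gives $B' = B^{(\varphi)}$) and $\Phi = \tau((\id,\id))$ (which gives $B' = \widetilde{B}$), together with the "inner shift" $\Phi = (\id,\alpha_x)$, since any $\Phi\in Q(G)$ is a product of maps of these three shapes. Once the statements are verified for these generators, the general case follows because each property in the list is phrased either symmetrically in $B,\widetilde{B}$ or explicitly "up to the action of $B\mapsto\widetilde{B}$", and each is clearly invariant under composition of equivalences. So the real content is: (i) the effect of $\varphi\in\Aut(G)$, (ii) the effect of $B\mapsto\widetilde{B}$, (iii) the effect of the inner shift $(\id,\alpha_x)$.

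For item (i): if $B' = B^{(\varphi)} = \varphi^{-1}B\varphi$, then $\varphi$ is visibly an isomorphism $(G,\circ_{B'})\to(G,\circ_B)$, which gives (a); it carries trivial operators to trivial operators, giving (b); and it carries $\Imm(B')$ onto $\Imm(B)$, $\Imm(\widetilde{B'})$ onto $\Imm(\widetilde{B})$ (note $\widetilde{B^{(\varphi)}} = (\widetilde{B})^{(\varphi)}$ by a one-line check), $\ker(B')$ onto $\ker(B)$, and $\Imm(B'\widetilde{B'})$ onto $\Imm(B\widetilde{B})$, giving (c), (d), (e) (splitting is $\Imm(\widetilde{B}B)=1$ by Proposition~\ref{prop:SplittingCond}, an isomorphism-invariant condition), and (f). For item (ii), $B\mapsto\widetilde{B}$: part (a) holds because $G^{(\circ)}(\widetilde B)\simeq G^{(\circ)}(B)$ — one can see this from Proposition~\ref{prop:Derived} applied to $\widetilde B$, or note both are isomorphic to the induced skew brace's additive-versus-multiplicative structure; (b) is immediate since $B$ trivial $\iff\widetilde B$ trivial; (c), (d), (e), (f) hold tautologically because the operations $\Imm(B)\leftrightarrow\Imm(\widetilde B)$ and $\ker(B)\leftrightarrow\ker(\widetilde B)$ are exactly what "up to the action of $B\to\widetilde B$" absorbs, and $\Imm(B\widetilde B)=\Imm(\widetilde B B)$ by Lemma~\ref{lem:ImagesManipul}, while splitting is symmetric by Proposition~\ref{prop:SplittingCond} combined with that same lemma.

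For item (iii), the inner shift $\Phi=(\id,\alpha_x)$: here I would first identify, using Proposition~\ref{prop:RBviaSubgroups} and the computation in the proof of Proposition~\ref{prop:equivalentRB}, what operator $B'$ with $H_{B'}=\Phi(H_B)$ actually is; one finds that $B'$ differs from $B$ by the inner structure of the derived product in a way that leaves $\Imm(B')=\Imm(B)$ unchanged and changes $\ker$, $\Imm(\widetilde B)$ only by conjugation, hence up to isomorphism nothing in the list moves. Concretely, $(B'(g),gB'(g))$ ranges over $\{(a,bx)\mid (a,b)\in H_B\}$ reindexed so that the first coordinates again realize a function of the second-coordinate-differences; tracking this shows $B'(g)=B(gx_0)$-type formula for a suitable $x_0$ depending on $x$, so $\Imm(B')=\Imm(B)$ and $\ker(B')$ is a translate/conjugate of $\ker(B)$, and $(G,\circ_{B'})\cong(G,\circ_B)$.

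The main obstacle I expect is item (iii): unwinding what RB-operator actually corresponds to the graph $\Phi(H_B)$ under an inner shift, and checking carefully that the passage $H_B\mapsto\Phi(H_B)\mapsto B'$ behaves as claimed on $\ker$, $\Imm(\widetilde B)$ and on the derived group — the other two generators are essentially formal. A clean way to handle it is to avoid writing $B'$ explicitly: instead observe that $\Phi$ restricts to a group isomorphism $H_B\to H_{B'}$ commuting (up to the obvious identifications) with the two coordinate projections of $G\times G$ onto $G$, and that $\ker(B)$, $\Imm(B)$, $\Imm(\widetilde B)$, $\Imm(B\widetilde B)$, the derived group $G^{(\circ)}$, and the splitting condition can all be read off from the pair $(H_B,\text{projections})$ in a way that $\Phi$ respects up to composing with an inner automorphism of $G$. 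That structural viewpoint makes (a)--(f) fall out uniformly for every $\Phi\in Q(G)$ at once, which is the argument I would ultimately write up.
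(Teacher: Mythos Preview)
Your plan is correct and is essentially the paper's approach. The paper is just more compressed: for (a) it goes straight to your ``structural viewpoint'' by observing that $\kappa\colon g\mapsto(B(g),gB(g))$ is an isomorphism $G^{(\circ)}(B)\to H_B$, so $G^{(\circ)}(B)\simeq H_B\simeq H_{B'}\simeq G^{(\circ)}(B')$ for any $\Phi\in Q(G)$ at once; for (d)--(f) it reduces, exactly as you do, to the inner shift $\Phi=(\id,\alpha_x)$ and notes that $\Imm(\widetilde{B'}B')$ is a conjugate of $\Imm(\widetilde{B}B)$ and that exact factorizations are preserved under conjugating one factor.
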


\begin{proof}
(a) Define $\kappa\colon G\to H_B$ as follows,
$\kappa\colon g\to (B(g),gB(g))$.
Note that $\kappa$ is bijective and 
$\kappa(g\circ h)\to \kappa(g)\kappa(h)$,
thus, $G^{(\circ)}(B)\simeq H_B$, hence $G^{(\circ)}(B)\simeq H_B\simeq H_{B'}\simeq G^{(\circ)}(B')$.

(b), (c) It is trivial.

(d) Let us consider $\Phi = (\varphi,\alpha_x\varphi)\in Q(G)$, $\varphi\in\Aut(G)$, $x\in G$.
Note that $\Imm(B^{(\varphi)}\widetilde{B^{(\varphi)}})\simeq \Imm(B\widetilde{B})$.
Thus, it remains to study the case when~$B'$ comes from~$B$ with the help of~$\Phi = (\id,\alpha_x)$. Then 
$$
\Imm(\widetilde{B'}B')
= \alpha_x( \Imm(\widetilde{B}B) ),
$$
i.\,e. the required subgroups are isomorphic.

(e) It follows from (d) and Proposition~\ref{prop:SplittingCond}.

(f) It follows from the fact that $G = H\cdot L$ is an exact factorization if and only if $G = H\cdot \alpha_x(L)$ is an exact factorization for any $x\in G$.
\end{proof}

\begin{remark}
Let us show that one may not generalize Proposition~\ref{prop:equivalentRB} on the whole~$\Aut(G\times G)$.
For this, take $G = \A_4$ and identify $\A_4 \times \A_4$ with the corresponding subgroup in~$\Ss_8$. Then the map~$B$ defined on~$\A_4 = V\rtimes L$ by Proposition~\ref{prop:semidirect} for 
$$
V = \langle (12)(34),(13)(24)\rangle, \quad
L = \langle (234)\rangle
$$ 
and $B|_L = \id$ is a Rota---Baxter operator.
We have $H_B = \langle (234)(687), (13)(24), (12)(34)\rangle$.
Let us consider $\psi = \conj_{(23)}\in\Aut(\Ss_8)$, which induces the automorphism of~$G\times G$. Then $\psi(H_B) = \langle (243)(687), (13)(24), (12)(34)\rangle$
and now, $\psi(H_B)$ has a non-trivial intersection with the diagonal subgroup of $G\times G$. Thus, the condition (b) from Proposition~\ref{prop:RBviaSubgroups} does not hold. Note that $\psi$ has the form $(\varphi,\id)$ for the outer automorphism~$\varphi$ of~$\A_4$.
\end{remark}

\section{Properties of some explicit construction}

Perhaps, the next result may be derived from~\cite{Das}.
For the sake of completeness, we provide its proof.

\begin{lemma}\label{lem:Main}
Let $B$ be an RB-operator on a finite group $G$ such that 

(i) $R = \Imm(B)\cap \Imm(\widetilde{B})$ is abelian,

(ii) $G = \ker(B)\cdot \Imm(B)$.

Then $\widetilde{B}|_{\Imm(B)}$ is a~homomorphism onto abelian subgroup~$R$ of $\Imm(B)$,
and $B$ is defined on $G$ by Corollary~\ref{coro:Main}.
\end{lemma}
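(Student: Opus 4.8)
The plan is to extract from the factorization hypothesis (ii) a precise description of how $B$ acts, using the interplay between $\ker(B)$, $\Imm(B)$, and the commuting operators $B$ and $B_+$. First I would write $G = HL$ with $H = \ker(B)$ and $L = \Imm(B)$, and observe by Proposition~\ref{prop:initial}d) that $B(hl) = B(l)$, so $B$ is entirely controlled by $C := B|_L$. I would check that $C$ is an RB-operator on $L$: this requires that $L$ is $\circ$-closed enough, or more directly that the RB-identity~\eqref{RB} restricted to arguments in $L$ keeps us inside $L$, which holds because $\Imm(B)\subseteq L$ and $lC(l)l'C(l)^{-1}\in L$ when $l,l'\in L$ (as $C(l)\in\Imm(B)=L$). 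Then I would need that $B$ being recovered from $C$ via $B(hl)=C(l)$ is consistent — i.e. the decomposition $g = hl$ being non-unique does not matter — which follows since $B(hl)=B(l)=C(l)$ by Proposition~\ref{prop:initial}d).

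Next I would analyze $\widetilde{B}$ restricted to $L=\Imm(B)$. By Lemma~\ref{lem:ImagesManipul}, $R = \Imm(B)\cap\Imm(\widetilde{B}) = \Imm(B\widetilde{B}) = \Imm(\widetilde{B}B)$. Since $B$ is surjective onto $L$ (as $L=\Imm(B)$), the set $\widetilde{B}(L) = \widetilde{B}(\Imm(B)) = \Imm(\widetilde{B}B) = R$; so $\widetilde{B}|_L$ maps onto $R$. To see it is a homomorphism I would use the hypothesis that $R$ is abelian: a general fact (Proposition~\ref{prop:RB-Hom} in reverse spirit, or a direct computation from~\eqref{RB} applied to $\widetilde{B}$) shows that an RB-operator whose image lies in an abelian subgroup and which factors appropriately is a homomorphism. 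More carefully, I expect to show $\widetilde{C} := \widetilde{B}|_L$ is the associated $\widetilde{}$-operator of the RB-operator $C$ on $L$, that $\Imm(\widetilde{C})=R$ is abelian, and then invoke that for an RB-group $(L,C)$ with $\Imm(\widetilde{C})$ abelian, $\widetilde{C}$ is a homomorphism $L\to R$ — this comes from $\widetilde{C}(l_1)\widetilde{C}(l_2) = \widetilde{C}(l_1\,l_1^{-1}C(l_1)^{-1}\cdots)$ simplifying once the image is central in itself. Dually, since $C = \widetilde{\widetilde{C}}$ and $\Imm(C)\cap\Imm(\widetilde{C}) = R$ as well, the same argument gives that $C$ itself is a homomorphism onto an abelian subgroup isomorphic to $R$.

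Finally I would verify the last clause, that $B$ is given by Corollary~\ref{coro:Main}: this needs $\Imm(\widetilde{C})\leqslant L$ to normalize $H=\ker(B)$. Here I would use Proposition~\ref{Prop:STSh}b), which gives $\ker(B)\unlhd\Imm(\widetilde{B})$; since $\Imm(\widetilde{C}) = R \subseteq \Imm(\widetilde{B})$, conjugation by elements of $R$ preserves $\ker(B)=H$, exactly the hypothesis of Corollary~\ref{coro:Main}. Then $B(hl)=C(l)$ with $C$ an RB-operator on $L$ and $\Imm(\widetilde{C})$ normalizing $H$ is precisely the construction of that corollary, so we are done.

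\textbf{Main obstacle.} The delicate point is proving that $\widetilde{C}$ (equivalently $C$) is a genuine \emph{homomorphism}, not merely a map onto $R$; the abelianness of $R$ must be fed into the RB-identity~\eqref{RB} at the right place, and one has to be careful that $\widetilde{B}$ of an element of $\Imm(B)$ again lands in $\Imm(B)$ so that all computations stay inside $(L,C)$ — this is where Lemma~\ref{lem:ImagesManipul} does the essential bookkeeping. A secondary subtlety is checking that the conjugating element $h''$ appearing in Corollary~\ref{coro:Main} stays in $H$, but that is handled by the normalization statement from Proposition~\ref{Prop:STSh}b) noted above.
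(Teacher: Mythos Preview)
Your overall skeleton is right --- reducing to $C = B|_L$ on $L=\Imm(B)$, identifying $\widetilde{C}=\widetilde{B}|_L$, getting $\widetilde{C}(L)=R$ via Lemma~\ref{lem:ImagesManipul}, and checking that $R\subseteq\Imm(\widetilde{B})$ normalizes $H=\ker(B)$ so that Corollary~\ref{coro:Main} applies --- all of this is correct and matches the paper. The gap is precisely at the step you flag as the main obstacle. The general principle you want to ``invoke'', namely that an RB-operator with abelian image is automatically a homomorphism, is \emph{false}: take the splitting operator $D$ on $\Ss_4$ coming from the exact factorization $\Ss_4 = D_8\cdot\langle(123)\rangle$; then $\Imm(D)=\langle(123)\rangle$ is abelian but $D$ is not a homomorphism since $D_8$ is not normal. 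So the phrase ``simplifying once the image is central in itself'' does not do the work.

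What is missing is the intermediate conclusion that $L/\ker(\widetilde{C})$ is itself abelian. The paper obtains this by passing through the $\circ$-structure: $\widetilde{B}|_L\colon (L,\circ_{\widetilde{B}})\to R$ is a surjective homomorphism (Proposition~\ref{prop:Derived}(c) for $\widetilde{B}$), so $(L,\circ)/\ker(\widetilde{B})\simeq R$ is abelian; since $\ker(\widetilde{B})\circ x=\ker(\widetilde{B})\cdot x$, the $\circ$- and $\cdot$-cosets coincide, and the induced bijection of $\widetilde{B}$ on the common coset space is a $(\circ)$-to-$(\cdot)$ isomorphism, forcing the ordinary quotient $L/\ker(\widetilde{B})$ to be abelian as well. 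Only once all commutators of $L$ lie in $\ker(\widetilde{B})$ does the RB-identity $\widetilde{B}(x)\widetilde{B}(y)=\widetilde{B}\bigl(x\widetilde{B}(x)y\widetilde{B}(x)^{-1}\bigr)$ collapse to $\widetilde{B}(xy)$, because then $x\widetilde{B}(x)y\widetilde{B}(x)^{-1}$ and $xy$ lie in the same $\ker(\widetilde{B})$-coset. (A side remark: your ``dually'' claim that $C$ is also a homomorphism onto something isomorphic to $R$ is off --- hypothesis (ii) forces $\Imm(C)=B(\Imm(B))=\Imm(B)=L$, so $C$ is a bijection of $L$, not a map onto a small abelian piece.)
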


\begin{proof}
By Proposition~\ref{Prop:STSh}, $\ker(\widetilde{B})\unlhd\Imm(B)$ and we may consider the group $\Imm(B)/\ker(\widetilde{B})$ and $B,\widetilde{B}$ are RB-operators on it. 
Moreover, $\widetilde{B}$ is a homomorphism from $(\Imm(B))^{(\circ)}$ into~$R = \Imm(B\widetilde{B}) = \Imm(\widetilde{B}B)$, here we define $\circ$ via the RB-operator~$\widetilde{B}$:
$$
y\circ x = y\widetilde{B}(y)x\widetilde{B}(y)^{-1}.
$$
By homomorphism theorem, $\widetilde{B}$ is an isomorphism from $(\Imm(B))^{(\circ)}/\ker(\widetilde{B})$ onto~$R$. 
Since $\ker(\widetilde{B})\circ x = \ker(\widetilde{B})\cdot x$ for all $x\in G$,
we have $\Imm(B)^{(\circ)}/\ker(\widetilde{B}) = (\Imm(B)/\ker(\widetilde{B}))^{(\circ)}\simeq R$ and thus $(\Imm(B)/\ker(\widetilde{B}))^{(\circ)}$ is abelian.
Now, we consider $\widetilde{B}\colon (\Imm(B)/\ker(\widetilde{B}))^{(\circ)}\to \Imm(B)/\ker(\widetilde{B})$, since $\widetilde{B}$ is an isomorphism,
the group $\Imm(B)/\ker(\widetilde{B})$ is abelian too.
Since $\widetilde{B}|_{\Imm(B)}$ is an RB-operator on an abelian group,
we derive that $\widetilde{B}$ is an automorphism of $\Imm(B)/\ker(\widetilde{B})$.

Choose an independent set $r_1,\ldots,r_n$ of generators of an abelian $R$
and find $t_i\in \Imm(B)$ such that $\widetilde{B}(t_i) = r_i$, $i=1,\ldots,n$.
Since $\widetilde{B}$ is a homomorphism of $\Imm(B)/\ker(\widetilde{B})$,
$\widetilde{B}(t_i t_j) = k_{ij} r_i r_j$ for some $k_{ij}\in \ker(\widetilde{B})$.
By~\eqref{RB}, we write down
$$
r_i r_j
= \widetilde{B}(t_i)\widetilde{B}(t_j) 
= \widetilde{B}(t_i r_i t_j r_i^{-1} ).
$$
Since $\widetilde{B}$ is well-defined on the abelian group~$\Imm(B)/\ker(\widetilde{B})$
and $\ker(\widetilde{B})t_i t_j = \ker(\widetilde{B})t_i r_i t_j r_i^{-1}$,
we obtain $k_{ij} = e$ and $\widetilde{B}(t_i t_j) = r_i r_j$.
Similarly, we show that $\widetilde{B}(t_i t_j t_k) = r_i r_j r_k$ and so on.
Thus, $\widetilde{B}$ is a homomorphism from $\Imm(B)$ to $R$. 

Finally, $C = B|_{\Imm(B)}$ is an RB-operator on $\Imm(B)$ with $\Imm(\widetilde{C}) = \Imm(B\widetilde{B})$ normalizing $\ker(B)$ in $G$, since $\ker(B)$ is normal in the whole $\Imm(\widetilde{B})$.
\end{proof}

Now, we provide the conditions under which we may apply Lemma~\ref{lem:Main}.
Let $B$ be an RB-operator on a finite group~$G$. 
If $\ker(B)=\Imm(\widetilde{B})$, then $B$~is a splitting RB-operator by Proposition~7. 
Consider the case $\ker(\widetilde{B})<\Imm(B)$ and $\ker(B)<\Imm(\widetilde{B})$.

Denote $H = \Imm(B)$, $L = \Imm(\widetilde{B})$, and $R=H\cap L$. 
Then
\begin{equation} \label{OrderIntersectionFormula}
|G|=|\Imm(\widetilde{B})\Imm(B)|=\frac{|\Imm(B)|\cdot|\Imm(\widetilde{B})|}{|R|}
= |\ker(B)|\cdot|\Imm(B)|=|\ker(\widetilde{B})|\cdot|\Imm(\widetilde{B})|,
\end{equation}
and
\begin{equation} \label{OrderOfIntersection}
|R|=|\Imm(B):\ker(\widetilde{B})|=|\Imm(\widetilde{B}):\ker(B)|>1.
\end{equation}

\begin{lemma}\label{lem:CondToMain}
Let $B$ be a non-splitting RB-operator on a~finite group~$G$, $R = \Imm(B)\cap\Imm(\widetilde{B})$. Then

(a) $R\nleqslant\ker(B)$ or $R\nleqslant\ker(\widetilde{B})$, 

(b) if $R\cdot\ker(B)=\Imm(\widetilde{B})$, then $G = \Imm(B)\cdot\ker(B)$ is an exact factorization.  
\end{lemma}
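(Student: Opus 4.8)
The plan is to handle parts (a) and (b) separately, leaning on the order relations \eqref{OrderIntersectionFormula}--\eqref{OrderOfIntersection}, the factorization \eqref{ImageFactorization}, the defining identity $\widetilde{B}(g) = g^{-1}B(g^{-1})$, and the following reformulation of ``non-splitting'': by Proposition~\ref{prop:SplittingCond} together with Lemma~\ref{lem:ImagesManipul}, the operator $B$ is splitting if and only if $R = \Imm(B)\cap\Imm(\widetilde{B}) = 1$, so our standing hypothesis is precisely that $R\neq 1$ (as also recorded in \eqref{OrderOfIntersection}).

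For (a) I would argue by contradiction. Assume $R\leqslant\ker(B)$ and $R\leqslant\ker(\widetilde{B})$, and take any $r\in R$. Since $R$ is a subgroup, $r^{-1}\in R\leqslant\ker(B)$, hence $B(r^{-1}) = e$; and since $r\in R\leqslant\ker(\widetilde{B})$, we get $e = \widetilde{B}(r) = r^{-1}B(r^{-1}) = r^{-1}$, so $r = e$. Thus $R = 1$, contradicting that $B$ is non-splitting. This is immediate once the definitions are unwound, so I expect no difficulty here.

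For (b), start from $G = \Imm(\widetilde{B})\,\Imm(B)$, which is \eqref{ImageFactorization}. The hypothesis exhibits the subgroup $\Imm(\widetilde{B})$ as the product $R\cdot\ker(B)$ of two subgroups; since that product set is a subgroup, it also equals $\ker(B)\cdot R$. Substituting and using $R\leqslant\Imm(B)$ gives
$$
G = \ker(B)\cdot R\cdot\Imm(B) = \ker(B)\cdot\Imm(B);
$$
as this product set is all of $G$, it is a subgroup, hence equals $\Imm(B)\cdot\ker(B)$ as well. It remains to check exactness, which is a pure order count: $|G| = |\Imm(B)\cdot\ker(B)| = |\Imm(B)|\,|\ker(B)|\,/\,|\Imm(B)\cap\ker(B)|$, while \eqref{OrderIntersectionFormula} gives $|G| = |\ker(B)|\,|\Imm(B)|$; comparing, $|\Imm(B)\cap\ker(B)| = 1$, so $G = \Imm(B)\cdot\ker(B)$ is an exact factorization.

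The only point that needs a little care — and the likeliest place to slip — is the product-of-subgroups bookkeeping in (b): one uses that a product $HK$ of two subgroups is itself a subgroup exactly when $HK = KH$, which is what licenses replacing $R\cdot\ker(B)$ by $\ker(B)\cdot R$ and then $\ker(B)\cdot\Imm(B)$ by $\Imm(B)\cdot\ker(B)$. Everything else is routine, relying only on $\ker(B)$ and $\Imm(B)$ being subgroups (Proposition~\ref{prop:initial}) and on the order identity \eqref{OrderIntersectionFormula}.
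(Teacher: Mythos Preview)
Your proof is correct and follows essentially the same route as the paper: for (a) the paper simply notes $R\leqslant\ker(B)\cap\ker(\widetilde{B})=1$, which is exactly the computation you unwind; for (b) the paper substitutes $\Imm(\widetilde{B})=R\cdot\ker(B)$ into \eqref{ImageFactorization} and absorbs $R$ into $\Imm(B)$, with exactness left implicit via \eqref{OrderIntersectionFormula} where you spell out the order count.
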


\begin{proof}
(a) If $R\leqslant\ker(B)$ and $R\leqslant\ker(\widetilde{B})$, then 
$R\leqslant\ker(B)\cap\ker(\widetilde{B}) = 1$, a contradiction. 

(b) Since $R\leqslant \Imm(B)$, we have 
$$
G = \Imm(B)\cdot\Imm(\widetilde{B})
= \Imm(B)\cdot R\cdot\ker(B)
= \Imm(B)\cdot\ker(B),
$$
where the last factorization is exact.
\end{proof}

\begin{corollary} \label{coro:CondToMain}
Let $B$ be a non-splitting RB-operator on a~finite group~$G$,
$R = \Imm(B)\cap\Imm(\widetilde{B})$ and $|R|$ is prime.
Then, up to action $B\to\widetilde{B}$, $B$ is defined on $G$ due to Lemma~\ref{lem:Main}.
\end{corollary}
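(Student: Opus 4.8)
The plan is to check that, after possibly replacing $B$ by $\widetilde{B}$, both hypotheses of Lemma~\ref{lem:Main} hold, and then quote that lemma. Hypothesis~(i) costs nothing: a group of prime order is cyclic, so $R$ is abelian. Hence the whole argument is about hypothesis~(ii), i.e. producing the exact factorization $G=\ker(B)\cdot\Imm(B)$ (or the analogue with $\widetilde{B}$ in place of $B$).

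First I would invoke Lemma~\ref{lem:CondToMain}(a): $R\nleqslant\ker(B)$ or $R\nleqslant\ker(\widetilde{B})$. Since $\widetilde{\widetilde{B}}=B$ (the operation $\tau((\id,\id))$ on graphs is an involution), passing from $B$ to $\widetilde{B}$ interchanges $\ker(B)$ and $\ker(\widetilde{B})$ while leaving $R=\Imm(B)\cap\Imm(\widetilde{B})$ unchanged; so without loss of generality $R\nleqslant\ker(B)$. Then $R\cap\ker(B)$ is a proper subgroup of $R$, and since $|R|$ is prime, $R\cap\ker(B)=1$.

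Next I would count inside $\Imm(\widetilde{B})$. By Proposition~\ref{Prop:STSh}(b) we have $\ker(B)\unlhd\Imm(\widetilde{B})$, and $R\leqslant\Imm(\widetilde{B})$, so $R\cdot\ker(B)$ is a subgroup of $\Imm(\widetilde{B})$, and
\[
|R\cdot\ker(B)|=|R|\cdot|\ker(B)|=|\Imm(\widetilde{B}):\ker(B)|\cdot|\ker(B)|=|\Imm(\widetilde{B})|,
\]
using $R\cap\ker(B)=1$ in the first equality and~\eqref{OrderOfIntersection} in the second. Hence $R\cdot\ker(B)=\Imm(\widetilde{B})$, and Lemma~\ref{lem:CondToMain}(b) gives the exact factorization $G=\Imm(B)\cdot\ker(B)$. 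Now hypotheses~(i) and~(ii) of Lemma~\ref{lem:Main} are satisfied, and that lemma says precisely that $B$ is defined on $G$ via Corollary~\ref{coro:Main}, i.e. due to Lemma~\ref{lem:Main}.

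I do not expect a genuine obstacle: this is a short corollary. The only point requiring a little care is the reduction ``up to the action $B\to\widetilde{B}$'' — one must confirm that this is an involution and that it swaps the kernels while fixing $R$, which is what legitimises the assumption $R\nleqslant\ker(B)$. The rest is the single order computation above, forced by~\eqref{OrderOfIntersection} together with the normality statement in Proposition~\ref{Prop:STSh}(b).
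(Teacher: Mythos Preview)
Your proof is correct and follows essentially the same route as the paper: invoke Lemma~\ref{lem:CondToMain}(a) to assume $R\nleqslant\ker(B)$, use primality of $|R|$ to get $R\cap\ker(B)=1$ and hence $R\cdot\ker(B)=\Imm(\widetilde{B})$, then apply Lemma~\ref{lem:CondToMain}(b) and finish with Lemma~\ref{lem:Main}. You have merely supplied more detail (the explicit order count via~\eqref{OrderOfIntersection} and the check that $B\mapsto\widetilde{B}$ is an involution swapping kernels), which the paper leaves implicit.
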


\begin{proof}
By Lemma~\ref{lem:CondToMain}a), we may suppose that $R\nleqslant\ker(B)$.
Then $|R\cap \ker(B)| = 1$ and thus, $R\cdot\ker(B)=\Imm(\widetilde{B})$.
By Lemma~\ref{lem:CondToMain}b), we get $G = \Imm(B)\cdot\ker(B)$.
Therefore, all conditions of Lemma~\ref{lem:Main} are satisfied.
\end{proof}

\begin{remark}\label{rem:R=2}
Corollary~\ref{coro:CondToMain} has a simple form, when $|R| = 2$ and $R\not\leqslant \ker(B)$.
We have the exact factorization $G = \ker(B)\cdot \Imm(B)$.
Choose $t\in \Imm(B)\setminus \ker (\widetilde{B})$ and denote $r = \widetilde{B}(t)$.
Each $x\in G$ has the form $x = kl$, where $k \in \ker(B)$ and $l\in\Imm(B)$. Since $|\Imm(B):\ker(\widetilde{B})|=2$, $l=t^\delta y$ for some $y\in \ker(\widetilde{B})$ and $\delta\in\{0,1\}$. 
Therefore, $B$ acts on~$G$ as follows:
\begin{equation}\label{eq:ActionBWhen|R|=2}
B(x)
= B(kl)
= B(l)
= l^{-1}\widetilde{B}(l^{-1})
= l^{-1}\widetilde{B}(y^{-1}t^\delta)
= l^{-1}r^\delta.
\end{equation}
\end{remark}

\section{Rota---Baxter operators on dihedral groups}

Below, we define the dihedral group $D_{2n}$ as follows,
$$
D_{2n} = \langle r,s \mid r^n = s^2 = e,\,r^s = r^{-1}\rangle.
$$

\begin{theorem} \label{thm:Dihedral}
(a) There are no non-splitting RB-operators on $D_{2n}$, where $n$ is odd.

(b) Let $n$ be even. Then all non-splitting RB-operators~$B$ on $D_{2n}$, up to action $B\to\widetilde{B}$, come from an exact factorization $D_{2n} = HL$, where $H = \ker(B)$ and $L = \Imm(B)$.
Moreover, $\widetilde{B}|_L$ is a~homomorphism onto $\mathbb{Z}_2$ or $\mathbb{Z}_2^2$.
Hence, $B$ on $D_{2n}$ is defined by Corollary~\ref{coro:Main}.
\end{theorem}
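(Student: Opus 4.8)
The plan is to follow the route sketched just before the statement. Fix a non-splitting RB-operator $B$ on $G=D_{2n}$ and set $H=\Imm(B)$, $L=\Imm(\widetilde B)$, $K=\ker(B)$, $M=\ker(\widetilde B)$, and $R=\Imm(B\widetilde B)=H\cap L$ (Lemma~\ref{lem:ImagesManipul}). Proposition~\ref{Prop:STSh} and the formulas \eqref{OrderIntersectionFormula}--\eqref{OrderOfIntersection} give $G=LH$, $K\unlhd L$, $M\unlhd H$, $L/K\simeq H/M$, $K\cap M=1$, $|K||H|=|M||L|=|K||M||R|=2n$, and $|R|>1$. Now every subgroup of $D_{2n}$ is either a cyclic subgroup of $\langle r\rangle$ or a dihedral $\langle r^{n/m},r^{i}s\rangle\simeq D_{2m}$ with $m\mid n$ (and, for $n$ even, a Klein four-group $\langle r^{n/2},r^{i}s\rangle$), so each of $H,L,K,M,R$ runs over an explicit short list, and the divisibility relations above together with the elementary intersection formulas for these subgroups cut the admissible configurations down severely.

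The first main step is to show that $R$ is abelian. If not, $R\simeq D_{2d}$ with $d\geq 3$, so both $H$ and $L$ are dihedral subgroups containing $R$; matching orders then forces $K$ and $M$ to be cyclic subgroups of $\langle r\rangle$ (hence normal in all of $D_{2n}$), and combining this with the normality $K,M\unlhd G^{(\circ)}$ of Proposition~\ref{Prop:STSh}(a) and the description $R\simeq\Imm(B)^{(\circ)}/\ker(\widetilde B)$ extracted in the proof of Lemma~\ref{lem:Main} (which is valid without assuming $R$ abelian) contradicts the reflection structure of $D_{2n}$. Granting $R$ abelian, Lemma~\ref{lem:CondToMain}(a) lets us assume, after possibly replacing $B$ by $\widetilde B$, that $R\nleqslant K$; then $R\cap K=1$, and since $|R|=|L:K|$ we get $RK=L$, so Lemma~\ref{lem:CondToMain}(b) yields the exact factorization $G=\ker(B)\cdot\Imm(B)=KH$. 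It remains to exclude $R\simeq\mathbb{Z}_{d}$ with $d\geq 3$, and $R\simeq\mathbb{Z}_2^2$ when $n$ is odd. The latter is clear, since $D_{2n}$ has no Klein four-subgroup for odd $n$. For the former: by Lemma~\ref{lem:Main}, $\widetilde B|_{H}$ is a homomorphism onto $R$, while the abelianization of any dihedral group is $\mathbb{Z}_2$ or $\mathbb{Z}_2^2$; hence a prime $p\mid d$ with $p$ odd (or a factor $4\mid d$) forces $H=\langle r^{n/m'}\rangle$ to be cyclic of order $m'$; then exactness of $G=KH$ gives $\gcd(m',n/m')=1$, whereas $\Imm(\widetilde C)=R$ normalizing $K$ (Corollary~\ref{coro:Main}) forces $p\mid n/m'$ (respectively a large power of $2$ dividing $n/m'$), incompatible with $p\mid m'$. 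So $R\simeq\mathbb{Z}_2$ for $n$ odd, and $R\simeq\mathbb{Z}_2$ or $\mathbb{Z}_2^2$ for $n$ even.

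With $R$ abelian and $G=\ker(B)\cdot\Imm(B)$ exact, part~(b) is exactly Lemma~\ref{lem:Main}: $\widetilde B|_{\Imm(B)}$ is a homomorphism onto $R\simeq\mathbb{Z}_2$ or $\mathbb{Z}_2^2$, and $B$ is recovered from the exact factorization $D_{2n}=HL$ with $H=\ker(B)$, $L=\Imm(B)$ via Corollary~\ref{coro:Main}. For part~(a), take $n$ odd, so $R\simeq\mathbb{Z}_2$; by Lemma~\ref{lem:Main} the map $C:=B|_{\Imm(B)}$ is an RB-operator on $\Imm(B)$, injective because $\ker C=K\cap H=1$ by exactness, with $\widetilde C=\widetilde B|_{\Imm(B)}$ a surjective homomorphism onto $R$. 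Since $\Imm(B)$ then has a subgroup of index $2$ and $n$ is odd, $\Imm(B)\simeq\mathbb{Z}_2$ or $\Imm(B)\simeq D_{2m'}$ with $m'$ odd, so $\ker(\widetilde C)$ is the (unique) subgroup $\Imm(B)\cap\langle r\rangle$ of rotations and the nontrivial value of $\widetilde C$ is a reflection; using $C=\widetilde{\widetilde C}$, i.e. $C(g)=g^{-1}\widetilde C(g^{-1})$, one checks that $C$ sends each rotation in $\Imm(B)$ to a rotation and each reflection in $\Imm(B)$ to a product of two reflections, again a rotation, so $\Imm(C)\subseteq\langle r\rangle$. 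But $B(kh)=B(h)$ for $k\in K$ (Proposition~\ref{prop:initial}(d)) together with $G=KH$ gives $\Imm(B)=B(G)=B(\Imm(B))=\Imm(C)$; since $\Imm(B)\supseteq R$ contains a reflection, this is a contradiction, so $D_{2n}$ has no non-splitting RB-operator when $n$ is odd.

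The step I expect to be the main obstacle is the abelianness of $R$ (equivalently, excluding a dihedral $R$): unlike everything else it is not forced by pure order/divisibility bookkeeping inside the subgroup lattice of $D_{2n}$, and seems to genuinely need the Rota---Baxter structure — the isomorphism $R\simeq\Imm(B)^{(\circ)}/\ker(\widetilde B)$, the normality of $K,M$ in $G^{(\circ)}$, and the fact that $G^{(\circ)}/K\simeq\Imm(B)$ is dihedral, so that $G^{(\circ)}$ is itself tightly constrained. A secondary, routine nuisance is the case-by-case verification for small $n$ and for exactly which dihedral and Klein subgroups may occur simultaneously as $H,L,K,M$, which in borderline cases may require a direct (possibly computer-assisted) check.
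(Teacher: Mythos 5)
You correctly isolate the crux, but you do not prove it: the claim that $R=\Imm(B)\cap\Imm(\widetilde B)$ cannot be non-abelian (dihedral) is exactly where all the work lies, and your sketch --- $K,M$ cyclic rotation subgroups, normality in $G^{(\circ)}$, the isomorphism $R\simeq\Imm(B)^{(\circ)}/\ker(\widetilde B)$, ``contradicts the reflection structure of $D_{2n}$'' --- is not an argument and does not obviously lead to one. Nothing in your setup identifies the group $\Imm(B)^{(\circ)}$ (it is the descendent group of a smaller RB-group, not known to be dihedral), and a dihedral quotient of such a group by a rotation subgroup is perfectly possible; worse, in the hardest configurations your claimed ingredients are vacuous: if $\Imm(B)=G$ then $\ker(B)=1$ and $R=\Imm(\widetilde B)$ may a priori be a large dihedral subgroup, and if $\Imm(B)=\Imm(\widetilde B)=G$ then $R=G$ and $K=M=1$. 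The paper's proof spends almost all of its effort precisely here: when both images are proper dihedral subgroups sharing a reflection it bounds $|R|$ by induction on the group order applied to the restricted RB-operators $B|_{\Imm(B)},\widetilde B|_{\Imm(B)}$, forcing $|R|=2^c\leqslant 4$; when $\Imm(B)=G$ it analyses $x=a^{\circ(n/2)}$ via Proposition~\ref{prop:CircPower} and Lemma~\ref{lem:centerPreimage} and then passes to $G/\langle r^{n/2}\rangle$; and when both images equal $G$ it produces a fixed-point-free automorphism of $G^{(\circ)}\simeq D_{2n}$, which is impossible. None of this is recovered by your order/divisibility bookkeeping in the subgroup lattice, and you yourself flag the step as the ``main obstacle'', so the proposal has a genuine gap at its central point.

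There is also a concrete logical error further down: from $R\nleqslant\ker(B)$ you conclude $R\cap\ker(B)=1$, hence $R\cdot\ker(B)=\Imm(\widetilde B)$ and the exact factorization $G=\ker(B)\cdot\Imm(B)$. That implication is valid only when $|R|$ is prime (this is exactly Corollary~\ref{coro:CondToMain}); for $R\simeq\mathbb{Z}_2^2$ (or larger, which you have not yet excluded at that stage) one may have $R\cap\ker(B)$ and $R\cap\ker(\widetilde B)$ both nontrivial, so neither of the replacements $B\leftrightarrow\widetilde B$ gives the exactness you need before invoking Lemma~\ref{lem:Main}. In the paper the exact factorization in the $\mathbb{Z}_2^2$ situations is obtained by other means (a coprimality of indices in the two-dihedral case, or trivially because $\ker(B)=1$ when $\Imm(B)=G$). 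Finally, note that part (a) does not need any of this machinery: the paper proves it directly by showing every reflection lies in $\ker(B\widetilde B)$ via the identity $B(r^ms)^2=B(r^{-2k})$ and oddness of $n$; your rotation/reflection parity argument for (a) is correct in itself, but it sits downstream of the unproved abelianness and exactness steps.
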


\begin{proof}
(a) We want to show that $r^m s\in \ker(B\widetilde{B})$ for all $0\leqslant m<n$.
It would imply that $\ker(B\widetilde{B})$ contains at least $n+1$ elements;
since $\ker(B\widetilde{B})$ is a subgroup, we prove that $B\widetilde{B}(g) = e$ for all $g\in G$. By Proposition~\ref{prop:SplittingCond}, $B$ is splitting.
One of $B(r^m s),\widetilde{B}(r^m s)$ lies in $\langle r\rangle$, say $B(r^m s) = r^k$ for some~$k$. Therefore
\begin{equation} \label{r^2k-formula}
r^{2k} = B(r^m s)B(r^m s) 
= B(r^m sr^k r^m sr^{-k})
= B(r^{-2k}),
\end{equation}
and $r^{2k}\in \ker(\widetilde{B})$. 
Since $n$ is odd, we get that $r^k\in \ker(\widetilde{B})$, and we are done.

(b) Let $n$ be even. For $n=2$, one may check the statement directly.
Let $n>2$. Define $R = \Imm(B)\cap \Imm(\widetilde{B})$.
When $|R|=2$, the statement follows from Remark~\ref{rem:R=2}.
Hence, we may suppose that $|R|>2$.

Assume that $\Imm(B) = \langle r^{d_1}\rangle$ and 
$\Imm(\widetilde{B}) = \langle r^{d_2}, s\rangle$, where $d_1,d_2\,|\,n$. 
Since $\ker(B)\unlhd\Imm(\widetilde{B})\simeq D_{2n/d_2}$, we have
$\ker(B) = \langle r^{d_2d_3}\rangle$ for some~$d_3$.
Thus, $\Imm(\widetilde{B})/\ker(B)$ is dihedral and noncyclic, whereas $\Imm(B)/\ker(\widetilde{B})$ is cyclic, a~contradiction to~\eqref{FactorIso}.

It remains to study the situation when 
both $\Imm(B)$ and $\Imm(\widetilde{B})$ are dihedral groups.

{\sc Case 1}.
$|\Imm(B)|, |\Imm(\widetilde{B})|<|G|$.
Denote $H = \Imm(B)$ and $L = \Imm(\widetilde{B})$.
Applying Lemma~\ref{lem:ImagesManipul} and induction, we have
$$
|\Imm(B^2\widetilde{B})| 
= |\Imm(B|_H)\cap \Imm({\widetilde{B}}|_H)| = 2^a, \quad
|\Imm(B\widetilde{B}^2)| 
= |\Imm(B|_L)\cap \Imm({\widetilde{B}}|_L)| = 2^b
$$ 
for $a,b\in \{0,1,2\}$.
By Lemma~\ref{lem:ImagesManipul} and by~\eqref{ImageFactorization}, we have
$$
R = \Imm(B\widetilde{B})
= B(R)\cdot\widetilde{B}(R)
= \Imm(B^2\widetilde{B})\cdot\Imm(B\widetilde{B}^2).
$$
Hence, analogously to~\eqref{OrderIntersectionFormula}, we deduce that 
$|R| = 2^c$ for $0\leqslant c\leqslant 4$.

{\sc Case 1A}. Suppose that $R$ contains some reflection, so, up to the action of $\Aut(D_{2n})$, we may assume that 
$\Imm(B) = \langle r^{d_1},s\rangle$ and 
$\Imm(\widetilde{B}) = \langle r^{d_2}, s\rangle$, where $d_1,d_2\,|\,n$.

According to~\eqref{ImageFactorization}, we have $(d_1,d_2) = 1$ and so, $n = d_1d_2d_3$ for suitable~$d_3$.
Since 
$$
|R|\cdot |G|
= |\Imm(B)|\cdot|\Imm(\widetilde{B})| 
= \frac{2n}{d_1}\cdot\frac{2n}{d_2}
= 2d_3\cdot 2n, 
$$
we derive that $|R| = 2d_3$.
Hence, $\ker(\widetilde{B})$ is a normal subgroup of index~$2d_3$ in $\Imm(B)$, i.\,e. $\ker(\widetilde{B}) = \langle r^{d_1d_3}\rangle$.
Analogously, $\ker(B) = \langle r^{d_2d_3}\rangle$.

Since we suppose that $|R|>2$, it remains to study the case $|R| = 2^c$ for $2\leqslant c\leqslant 4$. Hence, $d_3 = 2^{c-1}>1$.
Thus, $(d_1,d_3) = 1$ or $(d_2,d_3) = 1$.
It means that we have an exact factorization, say,
$G = \ker(B)\cdot\Imm(B)$. 
Such factorization implies that $\Imm(B^2) = \Imm(B)$.
By induction,
\begin{equation}\label{eq:dihedral-induction}
|R| = |(\widetilde{B}B)(G)| 
= |(\widetilde{B}B^2)(G)| 
= |(\widetilde{B}B)(H)| 
= |(B\widetilde{B})(H)| 
\leqslant 4,
\end{equation}
and we have exactly the case $R\simeq \mathbb{Z}_2\times \mathbb{Z}_2$.
Hence, the statement follows from Lemma~\ref{lem:Main}.

{\sc Case 1B}. Now, consider the case, when $R$ does not contain any reflection.
Denote $\Imm(B) = \langle r^{d_1},s\rangle$ and 
$\Imm(\widetilde{B}) = \langle r^{d_2}, r^k s\rangle$, where $d_1,d_2\,|\,n$ and $d_2\not|\,k$.
Again, by~\eqref{ImageFactorization}, we have either $(d_1,d_2) = 1$ or $(d_1,d_2) = 2$. In the first case, $R = \Imm(B)\cap \Imm(\widetilde{B}) = 1$, a contradiction.
In the second one, we get $R = \{e,r^{n/2}\}$, a contradiction with the assumption $|R|>2$.

{\sc Case 2}.
Only one of $|\Imm(B)|, |\Imm(\widetilde{B})|$ is equal to $|G|$.
Suppose that $\Imm (B) = G$. 
Choose $a\in G$ such that $B(a) = r$. Denote $x = a^{\circ(n/2)}$, then $B(x) = r^{n/2}$. 
By Proposition~\ref{prop:CircPower}, we may rewrite $x$ as 
$$
x = (B_+(a))^{n/2}(B(a))^{-n/2}
= (ar)^{n/2}r^{n/2}.
$$

We have either $a = r^m s$, $0\leqslant m<n$ or $a = r^m$, $1\leqslant m<n$. Assume that $a = r^m s$ and 
$n/2$ is odd. Then we have $x = r^{n/2+m-1}s$. 
By Lemma~\ref{lem:centerPreimage}, $x$ is centralized by $\Imm(\widetilde{B})$.
Hence, 
$\Imm(\widetilde{B})\leqslant C_{D_{2n}}(x) = \{e,r^{m-1} s,r^{n/2},r^{n/2+m-1}s\}\simeq \mathbb{Z}_2\times \mathbb{Z}_2$.
Moreover, we have the exact factorization $G = \ker(B)\cdot\Imm(B)$,
thus, we may again apply Lemma~\ref{lem:Main}.

If $a = r^m s$ and $n/2$ is even or $a = r^m$, then we get that $x=r^{n/2}$.
Define $K = \langle r^{n/2}\rangle$, the normal subgroup of order~2 in $D_{2n}$.
Since $B(r^{n/2}) = r^{n/2}$, we may consider induced RB-operators $B,\widetilde{B}$ on $G/K$.
Note that $\widetilde{B}(r^{n/2}) = e$.
By induction, $\Imm(\widetilde{B}|_{G/K})$ is a~subgroup of $\mathbb{Z}_2\times \mathbb{Z}_2$.
If $r^{n/2}\not\in \Imm(\widetilde{B})$, 
then $\Imm(\widetilde{B})\leqslant \mathbb{Z}_2\times \mathbb{Z}_2$,
and the statement holds by Lemma~\ref{lem:Main} due to the exact factorization $G = \ker(B)\cdot\Imm(B)$.
If $r^{n/2}\in \Imm(\widetilde{B})$,
then $\Imm(\widetilde{B})\leqslant \mathbb{Z}_2\times \mathbb{Z}_2\times \mathbb{Z}_2$ (since $r^{n/2}$ is central, we get an additional factor~$\mathbb{Z}_2$).
The case $\Imm(\widetilde{B})\simeq \mathbb{Z}_2^3$ 
is impossible, because $\mathbb{Z}_2^3$ may not be a subgroup of any dihedral group. Hence, we again apply Lemma~\ref{lem:Main}.

{\sc Case 3}.
$|\Imm(B)|=|\Imm(\widetilde{B})|=|G|$.
Recall that the map $B_+$, defined as $B_+(g)=gB(g)$, is a homomorphism from $G^{(\circ)}$ in $G$. Since $|\Imm(\widetilde{B})|=|\Imm(B_+)|=|G|$, we have $\ker(B_+)=e$ and $B_+$ is an isomorphism. Therefore, $\varphi = B^{-1}B_+$ is an automorphism of~$G^{(\circ)}\simeq D_{2n}$. It is trivial to check that $\varphi$ is fixed-point free. Since $n\geqslant 3$, the center of $D_{2n}$ is preserved by every automorphism. Hence, $D_{2n}$ does not admit fixed-point free automorphisms, and we arrive at a~contradiction.
\end{proof}

In~\cite{Das}, the correspondence between extensions of RB-groups and cohomologies was studied. 
As examples, the authors considered RB-operators on $\Ss_3$ and $D_8$.

\begin{corollary} \label{coro:S3}
All RB-operators on $\Ss_3 \simeq D_6$ are splitting.    
\end{corollary}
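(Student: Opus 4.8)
The plan is to apply Theorem~\ref{thm:Dihedral} with $n = 3$, which is odd, so that part~(a) of the theorem immediately gives that there are no non-splitting RB-operators on $D_6 \simeq \Ss_3$. This is the entire content of the corollary, so the proof is essentially a one-line invocation; the only thing worth spelling out is the isomorphism $\Ss_3 \simeq D_6$, under which the transpositions of $\Ss_3$ correspond to the reflections $r^m s$ and the $3$-cycles to the rotations $r^m$.

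For completeness I would recall why part~(a) of Theorem~\ref{thm:Dihedral} applies here: for any RB-operator $B$ on $D_6$, one shows each reflection $r^m s$ lies in $\ker(B\widetilde{B})$. Indeed one of $B(r^m s)$, $\widetilde{B}(r^m s)$ lies in $\langle r\rangle$ — say $B(r^m s) = r^k$ — and then the Rota--Baxter identity~\eqref{RB} gives $r^{2k} = B(r^m s)B(r^m s) = B(r^m s r^k r^m s r^{-k}) = B(r^{-2k})$, so $r^{2k} \in \ker(\widetilde{B})$; since $n = 3$ is odd, $2$ is invertible mod $3$ and hence $r^k \in \ker(\widetilde{B})$, i.e. $r^m s \in \ker(B\widetilde{B})$. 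As there are three reflections plus the identity, $\ker(B\widetilde{B})$ has at least $4 > |D_6|/2$ elements, forcing $\ker(B\widetilde{B}) = D_6$, i.e. $\Imm(\widetilde{B}B) = 1$; by Proposition~\ref{prop:SplittingCond}, $B$ is splitting.

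There is no real obstacle here: the corollary is a direct specialization of a theorem already proved in the excerpt. The only point requiring minimal care is making the identification $\Ss_3 \simeq D_6$ explicit so that the statement ``all RB-operators on $\Ss_3$ are splitting'' is literally a restatement of Theorem~\ref{thm:Dihedral}(a) for $n = 3$. Accordingly, the written proof can be as short as: ``Since $\Ss_3 \simeq D_6$ and $3$ is odd, the claim is immediate from Theorem~\ref{thm:Dihedral}(a).''
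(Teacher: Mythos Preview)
Your proposal is correct and matches the paper's approach exactly: the corollary is stated without proof in the paper, as it follows immediately from Theorem~\ref{thm:Dihedral}(a) applied to $n=3$. Your optional recapitulation of the argument from Theorem~\ref{thm:Dihedral}(a) is accurate but unnecessary, since the theorem itself is already established.
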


The next example appeared in~\cite[Example\,1.13]{SV} in the context of skew left braces, we reformulate it in terms of RB-operators.

\begin{example}
Define a map $B\colon \Ss_3\to \Ss_3$ as follows,
$$
B\colon (ij) \to (23), \quad
B\colon (123)^k \to e, \quad i\neq j,\ 1\leqslant i,j,k \leqslant 3.
$$
Then $B$ is a Rota---Baxter operator on $\Ss_3$.
By Corollary~\ref{coro:S3}, we know that $B$ is splitting, indeed,
$\Ss_3 = \langle (123)\rangle \cdot \langle (23)\rangle = \ker (B)\cdot \ker(\widetilde{B})$. Thus, $\Ss_3^{(\circ)}\simeq \mathbb{Z}_6$.
\end{example}

\begin{example}
In~\cite[Example\,1.18]{SV}, we have a Rota---Baxter operator~$B$ on $D_8$ defined as follows,
$$
B(r^k) = e, \quad
B(r^ks) = r, \quad 0\leqslant k<4.
$$
Note that $\widetilde{B}$ is invertible and $D_8^{(\circ)}\simeq Q_8$.
\end{example}

The next example shows that $\widetilde{B}B \neq B\widetilde{B}$ in general.

\begin{example}\label{exm:BBTildeNotCommute}
Let us consider the exact factorization $D_{16} = HL$ for
$H = \{e,s\}$, $L = \{e,r^2,r^4,r^6,rs,r^3s,r^5s,r^7s\}$.
We define $B$ on $D_{16}$ in such a manner that 
$$H = \ker(\widetilde{B}), \quad
\ker(B) = \langle r^4,rs\rangle,\text{ and } B(r^2) = B(r^6) = B(r^3s) = B(r^7s) = r^4.$$
Hence, $B$ is a~homomorphism from $L$ onto $\langle r^4\rangle\simeq \mathbb{Z}_2$,
moreover, $\Imm(B)|_L$ normalizes $\ker(\widetilde{B})$.
Thus, we get an RB-operator on~$D_{16}$.
Note that 
\begin{gather*}
(B\widetilde{B})(r) 
= B(\widetilde{B}(r)) 
= B(\widetilde{B}(sr)))
= B(\widetilde{B}(r^7s)))
= B(r^7sB(r^7 s)) 
= B(r^3s)
= r^4, \\
(\widetilde{B}B)(r)
= \widetilde{B}(B(r))
{=} \widetilde{B}( r^7\widetilde{B}(r^7))
= \widetilde{B}( r^7\widetilde{B}(sr^7))
{=} \widetilde{B}( r^7\widetilde{B}(rs))
= \widetilde{B}( r^7\cdot rs)
{=} \widetilde{B}(s) = e.
\end{gather*}
\end{example}

The following example shows that the center of a~group may not be preserved by an RB-operator.

\begin{example}\label{exm:GeneralInvertible}
Define $B$ on $D_{2n}$ in such a way that
$$
\widetilde{B}(r^{2k}) = e, \quad
\widetilde{B}(r^{2k+1}) = r^{n/2}s, \quad
\widetilde{B}(r^{2k}s) = r^{n/2}, \quad
\widetilde{B}(r^{2k+1}s) = s, \quad 0\leqslant k<n/2.
$$
Then $\widetilde{B}$ is a homomorphism from~$D_{2n}$ onto~$\mathbb{Z}_2\times \mathbb{Z}_2$
and $B$ is a Rota---Baxter operator on~$D_{2n}$ due to the trivial factorization 
$D_{2n} = \ker(B)\cdot \Imm(B) = 1\cdot D_{2n}$.
This example shows that the center of a~group may not be preserved by an RB-operator.
If $4\not|\,n$, we have 
$B(r^{n/2}) = r^{n/2}\widetilde{B}(r^{n/2}) = s$.
\end{example}

\begin{example}
Consider the exact factorization $G = D_{60} = HL = \langle r^{10}\rangle\cdot \langle r^3,s\rangle$.
We define an RB-operator on $L\simeq D_{20}$ as in Example~\ref{exm:GeneralInvertible}.
Further, we extend such~$B$ on the whole~$G$, putting $\ker(B) = H$.
Surely, $H$ is normalized by~$\Imm(\widetilde{B})$, since $H$ is normal in~$G$.
\end{example}

\begin{remark}
The descendent group~$D_{2n}^{(\circ)}$ for odd prime~$n$
is discussed in~\cite[Corollary\,6.5]{Byott}.
\end{remark}

Recall that a generalized quaternion group~$Q_{4n}$ of order~$4n$ is defined as follows:
$$
Q_{4n} = \langle r,s \mid r^{2n} = e,\,s^2 = r^n,\,r^s = r^{-1}\rangle.
$$

\begin{corollary}
Let $B$ be a non-splitting RB-operator on~$Q_{4n}$, where $n$ is odd.
Then $B$ is defined by~Remark~\ref{rem:R=2}.
\end{corollary}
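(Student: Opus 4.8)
The plan is to descend $B$ to the quotient $\bar{G} := Q_{4n}/Z$ with $Z = Z(Q_{4n})$ and then invoke Theorem~\ref{thm:Dihedral}(a). The key structural input is that a generalized quaternion group has a \emph{unique} involution: the only element of order dividing~$2$ in $Q_{4n}$ is the central element $z := r^n$, so $Z = \langle z\rangle \simeq \mathbb{Z}_2$ and $\bar G \simeq D_{2n}$. By Lemma~\ref{lem:centerPreimage}(b), $|B(z)|$ divides $|z| = 2$, hence $B(z) \in \{e, z\} = Z$, and likewise $\widetilde{B}(z) \in Z$. So either $z \in \ker(B)$, in which case $B(gz) = B(g)$ for all $g$ by Proposition~\ref{prop:initial}(d), or $B(z) = z$, in which case substituting $h = z$ into~\eqref{RB} and using that $z$ is central gives $B(gz) = B(g)z$; in both cases $B(gz)Z = B(g)Z$ for every $g$, and symmetrically for $\widetilde{B}$.

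Next I would observe that, by the previous paragraph, $B$ and $\widetilde{B}$ induce well-defined maps $\bar B$ and $\overline{\widetilde{B}}$ on $\bar G$ via $\bar g\mapsto\overline{B(g)}$ and $\bar g\mapsto\overline{\widetilde{B}(g)}$. Applying the quotient homomorphism to~\eqref{RB} shows that $\bar B$ is a Rota---Baxter operator on $\bar G$ (alternatively one checks via Proposition~\ref{prop:RBviaSubgroups} that the image of $H_B$ in $\bar G\times\bar G$ meets the kernel $Z\times Z$ exactly in $\{(e,e),(B(z),zB(z))\}$, so it has order $|G|/2 = |\bar G|$ and yields $H_{\bar B}$), and a direct computation gives $\widetilde{\bar B} = \overline{\widetilde{B}}$, hence $\widetilde{\bar B}\,\bar B = \overline{\widetilde{B}B}$.

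Finally I would assemble the pieces. Since $n$ is odd, Theorem~\ref{thm:Dihedral}(a) says every RB-operator on $D_{2n}\simeq\bar G$ is splitting, so by Proposition~\ref{prop:SplittingCond} $\Imm(\widetilde{\bar B}\,\bar B) = 1$; as $\widetilde{\bar B}\,\bar B = \overline{\widetilde{B}B}$, this forces $\Imm(\widetilde{B}B)\subseteq Z$. By Lemma~\ref{lem:ImagesManipul}, $R = \Imm(B)\cap\Imm(\widetilde{B}) = \Imm(\widetilde{B}B)$, so $|R|\leqslant 2$; and $R \neq 1$ because $B$ is non-splitting, whence $|R| = 2$ (in fact $R = Z$). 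As $|R|$ is prime, Corollary~\ref{coro:CondToMain} applies, and up to the action $B\to\widetilde{B}$ the operator $B$ is precisely the one described in Remark~\ref{rem:R=2}. The only step requiring genuine care is the verification that $B$ transports to an honest RB-operator on the dihedral quotient; everything else is a short assembly of the cited results, with the real leverage coming from the unique-involution property of $Q_{4n}$, which places $B(r^n)$ in the center for free.
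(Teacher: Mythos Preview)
Your proof is correct and follows essentially the same route as the paper: use the unique-involution property of $Q_{4n}$ together with Lemma~\ref{lem:centerPreimage}(b) to see that $B$ descends to the quotient $Q_{4n}/Z \simeq D_{2n}$, invoke Theorem~\ref{thm:Dihedral}(a) to conclude $|R|\leqslant 2$, and then use non-splitting to get $|R|=2$. You are in fact a bit more explicit than the paper in verifying the well-definedness of $\bar B$ and in citing Corollary~\ref{coro:CondToMain} for the final step, but the substance is identical.
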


\begin{proof}
It is known that $Z(Q_{4n}) = \{e,r^n\}$.
Since $r^n$ is a unique element of order~2 in~$Q_{4n}$, we conclude by Lemma~\ref{lem:centerPreimage} that the normal subgroup $K = \langle r^n\rangle$ is $B$-invariant: either $B(r^n) = r^n$ or $B(r^n) = e$.
Therefore, $B$ is an RB-operator on~$Q_{4n}/K \simeq D_{2n}$.
By Theorem~\ref{thm:Dihedral}(a), we know that all RB-operators on~$D_{2n}$ are splitting. Hence, $|\Imm(B)\cap\Imm(\widetilde{B})|\leqslant 2$. Since $B$ is non-splitting, we have $|R|=2$.
\end{proof}

Let us show that the case $\Imm(B)\cap\Imm(\widetilde{B})\simeq \mathbb{Z}_2$ is realizable on~$Q_{4n}$.

\begin{example}
Consider the exact factorization~$Q_{60} = \langle r^{10}\rangle\cdot \langle r^3,s\rangle = H\cdot L$
and define $B\colon Q_{60}\to Q_{60}$ as follows,
$\ker (B) = H$ and $\widetilde{B}\colon L\to L$ acts by the formula
$$
\widetilde{B}( r^{3k} s^i ) = \begin{cases}
r^{15}, & i = 1, \\
e, & i = 0.
\end{cases}
$$
It is an RB-operator on~$Q_{60}$, since $\widetilde{B}$ is an endomorphism of~$L$ and $\Imm(\widetilde{B}|_L)$ normalizes~$H$. 
\end{example}

\section{Rota---Baxter operators on alternating groups}

This section is devoted to the study of Rota---Baxter operators on simple groups~$\A_n$, $n\geqslant5$.
For completeness, let us deal with $\A_n$ for small $n$.
Since $\A_3 \simeq \mathbb{Z}_3$ is abelian, an RB-operator on it is nothing more than a homomorphism.

Computations show that, up to equivalence, we have the following non-trivial RB-operators on $\A_4$:

(B1) splitting one coming from the exact factorization $\A_4 = HL$,
where $H=\langle(234)\rangle \simeq \mathbb{Z}_3$ and $L = V_4$ is the Klein four-group.

(B2) non-splitting one coming from the exact factorization $\A_4 = HL$,
where $H = \{e\}$, $L = \A_4$, and an RB-operator~$C$ on $L$ is a natural homomorphism from 
$L = \mathbb{Z}_3 \rtimes V_4$ onto $\mathbb{Z}_3$.
Thus, $\Imm(B\widetilde{B}) \simeq \mathbb{Z}_3$ and Lemma~\ref{lem:Main} is again valid.

The fact that $\A_4^{(\circ)}$ is isomorphic either to $\A_4$ (for trivial RB-operators) or to $\mathbb{Z}_6\times \mathbb{Z}_2$ may be derived from~\cite{Crespo}.

The study of splitting Rota---Baxter operators is the same as the study of exact factorizations, which are described in~\cite{WiegoldW}. Therefore, we will focus on non-splitting RB-operators on $\A_n$.

Below, we will apply the following conditions on numbers $m,q$:

(i) $q$ is a prime-power;

(ii) Each prime divisor of $m$ divides $q-1$;

(iii) $q\equiv3\!\!\pmod4$ and $m\equiv2\!\!\pmod4$.

\begin{theorem}\label{thm:alternating}
Let $B$ be a Rota---Baxter operator on a simple alternating group $\A_n$. \\ 
(1) $B$ is non-splitting if and only if one of the following holds:
\begin{itemize}
\item[$(a)$] $n=q^m$ and $m,q$ satisfy the conditions (i)--(iii);
\item[$(b)$] $n=q^m+1$ and $m,q$ satisfy the conditions (i)--(iii). 
\end{itemize}
(2) Let $B$ be a non-splitting Rota---Baxter operator on $\A_n$. Then there are $s$ non-equivalent Rota---Baxter operators on $\A_n$, where
\begin{itemize}
\item[$(a)$] $s=3$ for $n\in\{7^2, 23^2\}$;
\item[$(b)$] $s=2$ for $n=q^m$, $n\neq9$; 
\item[$(c)$] $s=1$ for $n\in\{9, q^m+1\}$. 
\end{itemize}
\end{theorem}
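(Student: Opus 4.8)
The plan is to combine the reduction machinery of Section~5 with the factorization theory of finite simple groups and with the classification of sharply 2-transitive groups from Section~3. First I would deal with part~(1). Given a non-splitting RB-operator $B$ on $\A_n$, Proposition~\ref{Prop:STSh}(d) gives the factorization $\A_n = \Imm(\widetilde{B})\cdot\Imm(B)$, and Lemma~\ref{lem:ImagesManipul} shows $R = \Imm(B)\cap\Imm(\widetilde{B}) = \Imm(B\widetilde{B})\neq 1$ precisely because $B$ is non-splitting (Proposition~\ref{prop:SplittingCond}). So $\A_n$ admits a nontrivial factorization into two proper subgroups, and one applies the Liebeck--Praeger--Saxl classification~\cite{LPS_fact} of such factorizations of alternating (and symmetric) groups. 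Running through that list, the cases that can actually carry an RB-operator are forced to be of the two stated shapes: the natural actions realize $\A_{q^m}$ (resp.\ $\A_{q^m+1}$) as containing an affine-type sharply 2-transitive group $L=FN$ of degree $q^m$ acting on $q^m$ (resp.\ fixing a point of $q^m+1$). The constraint that one factor must lie inside $\A_n$ and admit an index-2 subgroup — which is exactly what Corollary~\ref{coro:Zassenhaus} computes — then forces conditions (i)--(iii) together with the two exceptional degrees $7^2$, $23^2$; here the Zassenhaus Theorems~\ref{thm:Zassenhaus-1}--\ref{thm:Zassenhaus-2} identify the relevant $L(m,q,t)$ and the two sporadic $N\simeq 2^-\Ss_4$ (resp.\ $2^-\Ss_4\times\mathbb{Z}_{11}$) situations. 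Conversely, for each such $n$ one produces a non-splitting RB-operator explicitly: take the exact factorization of $\A_n$ coming from $L$ and the complement, let $C$ on $L$ be a homomorphism of $L$ onto its abelian (index-2) quotient $\mathbb{Z}_2$ — legitimate by Proposition~\ref{prop:RB-Hom} — check $\Imm(\widetilde{C})$ normalizes the other factor, and invoke Corollary~\ref{coro:Main}; this $B$ is non-splitting since $\Imm(B\widetilde{B})\simeq R\neq 1$.

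For the counting in part~(2), I would show that every non-splitting $B$ is, up to the action $B\to\widetilde{B}$ and up to $\Aut(\A_n)$-equivalence, one of the operators built in part~(1), and that $|R|=2$ in all cases. The key point is that $R$ must have prime order: from~\eqref{OrderOfIntersection} and the structure of $L$, $R$ is a quotient of a Frobenius complement $N$, hence (being abelian) cyclic; combined with the index-2 data from Corollary~\ref{coro:Zassenhaus}, $|R|=2$. Now Corollary~\ref{coro:CondToMain} applies: $B$ is defined via Lemma~\ref{lem:Main}, so it is completely determined (up to $B\to\widetilde{B}$) by the exact factorization $\A_n = \ker(B)\cdot\Imm(B)$ together with a homomorphism of $\Imm(B)$ onto $\mathbb{Z}_2$, i.e.\ by an index-2 subgroup of $\Imm(B)$. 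Passing to equivalence classes, the count of non-equivalent operators therefore equals the number of $\Aut(\A_n)$-orbits of such data. For $n=q^m$ with $n\neq 9$, Corollary~\ref{coro:Zassenhaus}(1) gives two non-isomorphic index-2 subgroups $FS_1$, $FS_2$, whence $s=2$; the degenerate $n=9$ (where $N=Q_8$ and all three index-2 subgroups coincide up to the extra symmetry, and the involution $B\to\widetilde{B}$ identifies the two sides) collapses to $s=1$; for $n=q^m+1$ one of the two index-2 subgroups and its $\widetilde{B}$-image get identified by the action $B\to\widetilde B$ so again $s=1$; and for the two exceptional degrees $7^2$, $23^2$ there is the generic pair of index-2 subgroups of the $L(m,q,t)$ form \emph{plus} the one extra coming from the sporadic $N\simeq 2^-\Ss_4$ (resp.\ $2^-\Ss_4\times\mathbb{Z}_{11}$) of Corollary~\ref{coro:Zassenhaus}(2),(3), giving $s=3$. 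The small cases $\A_4,\A_5,\A_6,\ldots$ and the verification that the $\Aut(\A_n)$-action does not further merge or split these families would be checked directly, partly by the GAP computation mentioned in the introduction.

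The main obstacle I expect is the middle step of part~(1): extracting, from the full LPS list of factorizations $\A_n=XY$, exactly the constraint that one factor is (contained in a group inducing) a sharply 2-transitive action and the other is a point stabilizer, and ruling out all the intransitive / imprimitive / product-action entries as incompatible with the RB-conditions $|H_B|=|\A_n|$ and $H_B\cap\Delta=1$ from Proposition~\ref{prop:RBviaSubgroups}. This is where one must be careful that $\Imm(B),\Imm(\widetilde{B})$ cannot be, say, two large subgroups meeting in a non-abelian group — which is precisely what Lemma~\ref{lem:CondToMain} and the parity bookkeeping in Corollary~\ref{coro:Zassenhaus} are designed to preclude. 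A secondary subtlety is keeping the equivalence bookkeeping honest in part~(2): one has to confirm that the $Q(\A_n)$-action (in particular $B\to\widetilde{B}$ and $\mathrm{Out}(\A_n)$, which is $\mathbb{Z}_2$ for $n\neq 6$ and $\mathbb{Z}_2^2$ for $n=6$) acts on the finite set of (factorization, index-2 subgroup) pairs exactly so as to produce the stated orbit counts $3,2,1$, with $n=6$ and $n=9$ handled as genuinely special small cases.
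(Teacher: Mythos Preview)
Your overall architecture matches the paper's: factorize $\A_n=\Imm(B)\cdot\Imm(\widetilde B)$, invoke LPS, feed the surviving cases into Corollary~\ref{coro:Zassenhaus}, and rebuild $B$ via Lemma~\ref{lem:Main}/Remark~\ref{rem:R=2}. Two specific points in your sketch, however, do not go through as stated.

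\textbf{The source of $|R|=2$.} You argue that $R$ is a quotient of the Frobenius complement $N$, ``hence (being abelian) cyclic''. But you have not yet shown $R$ is abelian, and quotients of Frobenius complements need not be cyclic or of prime order. In the paper the bound on $|R|$ is obtained from the \emph{other} factor: the LPS result used (their Theorem~D) says one factor $L$ is $k$-homogeneous while the other satisfies $\A_{n-k}\unlhd H\leqslant(\Ss_{n-k}\times\Ss_k)\cap\A_n$. Since $\ker(\widetilde B)\unlhd H$ and $H$ is (in the relevant case $k=2$) isomorphic to $\Ss_{n-2}$, whose only proper nontrivial normal subgroup is $\A_{n-2}$, one gets $|R|=|H:\ker(\widetilde B)|=2$ immediately. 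The parallel argument in the $k=3$ case is what forces $L$ to be sharply $3$-transitive, not sharply $2$-transitive fixing a point as you wrote; one lands on $L\simeq M(p^r)\simeq\PSL_2(q).2$ and only then does its point stabilizer become the sharply $2$-transitive group governed by Corollary~\ref{coro:Zassenhaus}. Your ``running through that list'' hides precisely this $k=1,\ldots,5$ case split, which is where most of the eliminations (Mathieu groups, $\operatorname{P\Gamma L}_2(32)$, the $k=3$ subcases with $H=(\Ss_{n-3}\times\Ss_3)\cap\A_n$ or $\A_{n-3}\times\A_3$, etc.) actually happen.

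\textbf{The reason $s=1$ for $n=q^m+1$.} You attribute this to $B\to\widetilde B$ identifying two index-$2$ subgroups. That is not what occurs. In the $k=3$ case one has $L\simeq\PSL_2(q).2$, and this group has a \emph{unique} proper normal subgroup, namely $\PSL_2(q)$ of index~$2$; so there is only one admissible $\ker(B)$ to begin with, hence only one operator. Likewise your explanation for $n=9$ is off: there $N\simeq Q_8$ and the three index-$2$ subgroups are genuinely isomorphic (all $\mathbb{Z}_4$), which is why Corollary~\ref{coro:Zassenhaus}(1) records only one non-isomorphic subgroup in that case, not because of an extra $\widetilde B$-identification.
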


\begin{proof}
We assume that $B$ is a non-splitting RB-operator on $G=\A_n$. According to Proposition~\ref{Prop:STSh}, we have
$$
G=\Imm(B)\cdot\Imm(\widetilde{B})=H\cdot L,
$$
$$
\ker(\widetilde{B})\unlhd\Imm(B),\quad \ker(B)\unlhd\Imm(\widetilde{B}).
$$
If $\ker(B)=1$ then $B$ is a splitting RB-operator~\cite[Theorem~50]{BG}.
If $\ker(B)=\Imm(\widetilde{B})$ then $B$~is also a splitting RB-operator by Proposition~\ref{prop:SplittingCond}. 
The same is true for $\ker(\widetilde{B})$. 
Therefore, we consider only the following case: 
$$
1<\ker(\widetilde{B})<\Imm(B), \qquad 1<\ker(B)<\Imm(\widetilde{B}).
$$

Define $R=H\cap L$. By Lemma~\ref{lem:CondToMain}(a) it follows that either 
$R\nleqslant\ker(B)$ or $R\nleqslant\ker(\widetilde{B})$. 

Since $\ker(\widetilde{B})>1$ and $\ker(B)>1$, we have
$G=HL$, where $H$ and $L$ are proper subgroups of $G$. According to \cite[Theorem~D]{LPS_fact}, 
$$L \text{ is } k\text{-homogeneous } (1\leqslant k\leqslant 5),\quad \A_{n-k}\unlhd H\leqslant (\Ss_{n-k}\times \Ss_k)\cap\A_n.$$

We consider all the possibilities for $k$.
In order to avoid non-simple groups $\A_n$, we assume that $n\geqslant10$. The cases $5\leqslant n\leqslant 9$ can be checked manually using the information from~\cite{Atlas} or with the help of GAP.
\vspace{0.3em}

$\boldsymbol{k=1.}$ In this case, $H=\A_{n-1}$ is a simple group and it follows from the proof of~\cite[Theorem~53]{BG} that $B$ must be splitting.
\vspace{0.3em}

$\boldsymbol{k=5.}$ By~\cite[Theorem~1]{Kantor} there is no 5-homogeneous but not 5-transitive finite group. Thus, $L$ is 5-transitive, and by~\cite[Theorem~5.3]{Cameron81} $L$ must be one of the Mathieu groups $M_{12}$ or $M_{24}$. Mathieu groups are simple, and we do not have a non-splitting RB-operator.\vspace{0.3em}

$\boldsymbol{k=4.}$ The Mathieu groups $M_{11}$ and $M_{23}$ are the only finite 4-transitive groups, which are not 5-transitive and so have not been considered. Since these two groups are simple, we need to consider 4-homogeneous but not 4-transitive groups. By~\cite[Theorem~1]{Kantor} we have $L\in\{\PSL_2(8),\operatorname{P\Gamma L}_2(8),\operatorname{P\Gamma L}_2(32)\}$.

Since $n\geqslant10$, we need to consider only the case $L = \operatorname{P\Gamma L}_2(32)$.
By~\cite{Kantor}, $L=\operatorname{P\Gamma L}_2(32)$ acts sharply 3-transitive on $\Omega=\{1,2,\ldots,33\}$. Since the only proper normal subgroup of $L$ is $\PSL_2(32)$, we have $\ker(B)=\PSL_2(32)$ and $|\operatorname{P\Gamma L}_2(32):\PSL_2(32)|=5$. 
Then $R\simeq \mathbb{Z}_5$ and $R\cdot\ker(B)=\Imm(\widetilde{B})$, since $(|R|,|\ker(B)|)=1$. 
Therefore 
$G=\Imm(B)\cdot\ker(B)=H\cdot\PSL_2(32)$ 
is the exact factorization. 
Since $\PSL_2(32)$ acts sharply 3-transitive on~$\Omega$, we get that $H=\A_{30}$~\cite[Theorem A(I)]{WiegoldW}. Thus, $H$ is simple, and we do not have a~non-splitting RB-operator.
\vspace{0.3em}

$\boldsymbol{k=2.}$ In this case, $\A_{n-2}\unlhd H\leqslant (\Ss_{n-2}\times \Ss_2)\cap\A_n$. Since $H$ is not simple, we have 
$\Imm(B)=H=(\Ss_{n-2}\times \Ss_2)\cap\A_n\simeq\Ss_{n-2}$. 
The only proper normal subgroup of $\Ss_{n-2}$ is $\A_{n-2}$. Therefore 
$\ker(\widetilde{B})=\A_{n-2}$ and $|R|=|\Ss_{n-2}:\A_{n-2}|=2$. 

\noindent (a) Assume that $R\nleqslant\ker(B)$. By Lemma~\ref{lem:CondToMain}(b) we have the exact factorization
$$
G=\Imm(B)\ker(B)=\Ss_{n-2}\cdot K.
$$
According to~\cite[Theorem A(II)]{WiegoldW} $K=\operatorname{ASL}(1,q)$, where $q\equiv3\!\pmod4$. Since $\ker(B)\unlhd\Imm(\widetilde{B})$ and $|\Imm(\widetilde{B}):\ker(B)|=|R|=2$, it follows that $\Imm(\widetilde{B})=\operatorname{AGL}(1,q)$. 
However, $\operatorname{AGL}(1,q)$ contains odd permutations; a contradiction. 

\noindent (b) Assume that $R\nleqslant\ker(\widetilde{B})$. 
By Lemma~\ref{lem:CondToMain}(b) we have the exact factorization
$$
G=\Imm(\widetilde{B})\ker(\widetilde{B})=L\cdot\A_{n-2}.
$$
According to~\cite[Theorem A(I)]{WiegoldW} $L$ is a sharply 2-transitive group.
Since $|\Imm(\widetilde{B}):\ker(B)|=2$, $L$~must contain a~normal subgroup of index~2 and we may apply Corollary~\ref{coro:Zassenhaus}. 

At first, consider the case (1) of Corollary~\ref{coro:Zassenhaus}, where $L=L(m,q,t)=FN$. Then we have the following conditions:

(1) Each prime divisor of $m$ divides $q-1$;

(2) If 4 divides $m$, then 4 divides $q-1$;

(3) $q\equiv3\!\!\pmod4$.

Since $q\equiv3\!\!\pmod4$, then $(2)$ is equivalent to the condition $m\equiv2\!\!\pmod4$ and we have the conditions (i)--(iii) of the theorem. By Corollary~\ref{coro:Zassenhaus}(1), $L=FN$ has two non-isomorphic normal subgroups $FS_1$ and $FS_2$.

Thus, we apply Remark~\ref{rem:R=2} and get two non-equivalent RB-operators on $G$ (see examples~\ref{exm:A9} and~\ref{exm:An} below). 

The cases (2) and (3) of Corollary~\ref{coro:Zassenhaus} provide two other sharply 2-transitive groups $L=FN$ of degree $n=7^2$ and $n=23^2$, respectively. Each of these groups has only one normal subgroup of index~2, 
and again by Remark~\ref{rem:R=2} we get one RB-operator in each case. 
Note that if $n=7^2$ we have two non-isomorphic sharply 2-transitive groups $L_1=L(m,q,t)=L(2,7,1)$ and $L_2=FN$ with $N\simeq\SL_2(3).2$. As it was mentioned above, $L_1$ provides two non-equivalent RB-operators on $G$ and $L_2$ provides the third one. Since $L=\Imm(\widetilde{B})$ and $L_1\not\simeq L_2$, we have three non-equivalent RB-operators on $G=\A_{7^2}$. Similarly, we get three non-equivalent RB-operators on $G=\A_{23^2}$.

$\boldsymbol{k=3.}$ In this case, $\A_{n-3}\unlhd H\leqslant (\Ss_{n-3}\times \Ss_3)\cap\A_n$ and $L$ is 3-homogeneous. 
Since $H$~is not simple, we have 
$$H\in\{(\Ss_{n-3}\times\Ss_3)\cap\A_n, (\Ss_{n-3}\times\Ss_2)\cap\A_n, \A_{n-3}\times\A_3\}.$$

\noindent (a) At first, consider the case 
$H=(\Ss_{n-3}\times\Ss_2)\cap\A_n\simeq\Ss_{n-3}.$ The only proper normal subgroup of $H$ is $\A_{n-3}$. Therefore
$\ker(\widetilde{B})=\A_{n-3}, |R|=|H:\ker(\widetilde{B})|=2$, and $$|\Imm(\widetilde{B})|=|L|=\frac{|G|\cdot|R|}{|H|}=\frac{(n!/2)\cdot2}{(n-3)!}=n(n-1)(n-2).$$
Then $L$ is a sharply 3-transitive group. Such groups have degree $q+1=p^r+1$ for some prime $p$. If $p=2$ then by~\cite[Theorem~2.1]{HuppertB} $L\simeq\PSL_2(q)$ is simple, which is not the case.

Let $p$ be an odd prime. By~\cite[Theorem~2.6]{HuppertB}, we have either $L=\PGL_2(q)$, or $r$~is even and $L=M(p^{r})\simeq\PSL_2(q).2$. In both cases, $\PSL_2(q)$ is a subgroup of index~2 in~$L$. Since $\PGL_2(q)$ contains an element $g$ of order $q+1$, we have $g\notin\A_{q+1}=\A_n$ and $L\neq\PGL_2(q)$.

The group $L=M(p^{r})$ acts on the projective line $\Omega=GF(p^r)\cup\{\infty\}$ (see,~\cite[Example~1.3]{HuppertB}). 
The stabilizer $L_\infty$ is a sharply 2-transitive Frobenius group of odd degree~$q$. This case was considered above, and we have the conditions (i)--(iii) on the numbers~$m$,~$q$.

Under these conditions, we have the exact factorization $G=\A_{n-3}\cdot L$, where $\ker(\widetilde{B})=\A_{n-3}\unlhd H=(\Ss_{n-3}\times \Ss_2)\cap\A_n$ and $\ker(B)=\PSL_2(q)\unlhd L=M(q^m)$. Now we can apply Remark~\ref{rem:R=2} to get one RB-operator in each case.

\vspace{0.5em}
\noindent (b) $H=(\Ss_{n-3}\times\Ss_3)\cap\A_n\simeq (\A_{n-3}\times\A_3):\mathbb{Z}_2$. 
We have
$$
|R| = \frac{|H|\cdot|L|}{|G|}
= \frac{(n-3)!\cdot3\cdot|L|}{(n!/2)}
= 3\cdot\frac{2|L|}{(n-2)(n-1)n}.
$$

If $L$ is 3-transitive, then $n(n-1)(n-2)$ divides $L$. If $L$ is not 3-transitive, then $|L|$ is divisible by $|\PSL_2(q)|=(q-1)q(q+1)/2=(n-2)(n-1)n/2$ or $L=\operatorname{A\Gamma L}_1(32)$ (see~\cite[Theorem~1(ii),(iii)]{Kantor}). 

Since $|\operatorname{A\Gamma L}_1(32)|=32\cdot31\cdot5$, we have $|R|=30/33$ and this case is impossible.
In two other cases we get that 3~divides $|R|=|\Imm(B):\ker(\widetilde{B})|$. 

On the other hand, $\Imm(B)=H=(\A_{n-3}\times\A_3)\!:\!\mathbb{Z}_2$ has only three proper normal subgroups: $\A_{n-3}\times\A_3, \A_{n-3}$, and $\A_3$. 

Since $|H:(\A_{n-3}\times\A_3)|=2$, the case $\ker(\widetilde{B})=\A_{n-3}\times\A_3$ is impossible.

If $\ker(\widetilde{B})=\A_3$ then $|R|=|\Imm(B)|/3=(n-3)!$, a contradiction with $R\leqslant L$.

Assume that $\ker(\widetilde{B})=\A_{n-3}$. Then 
$$
|R| = |\Imm(B):\ker(\widetilde{B})| = 6, \quad 
|L| = \frac{|G|\cdot|R|}{|H|}
= \frac{(n!/2)\cdot 6}{((n-3)!/2)\cdot6}
= n(n-1)(n-2).
$$

Then $L$ is a sharply 3-transitive group. According to case~(a) we have $L\simeq\PSL_2(q).2$. Therefore, $L$ has proper normal subgroups only of index~2. 
It is a contradiction with $|R|=|\Imm(\widetilde{B}):\ker(B)|=|L:\ker(B)|=6$.

\vspace{0.5em}
\noindent (c) 
$H=\A_{n-3}\times\A_3.$ The only proper normal subgroups of $H$ are $\A_{n-3}$ and $\A_3$. If $\ker(\widetilde{B})=\A_3$ then $|R|=\frac{|H|}{|\A_3|}=\frac{(n-3)!}{2}$, a contradiction with $R\leqslant L$. 

Therefore, $\ker(\widetilde{B})=\A_{n-3}$ and $|R|=|H:\ker(\widetilde{B})|=3$. 
We have
$$
|\Imm(\widetilde{B})| 
= |L| 
= \frac{|G|\cdot|R|}{|H|}
= \frac{(n!/2)\cdot3}{((n-3)!/2)\cdot3}
= n(n-1)(n-2).
$$
Thus, $L$~is a~sharply $3$-transitive group. According to case~(a) we have $L\simeq\PSL_2(q).2$. 
Therefore, $L$~has proper normal subgroups only of index~2. 
It is a contradiction with $|R|=|\Imm(\widetilde{B}):\ker(B)|=|L:\ker(B)|=3$.
\end{proof}

\begin{example}\label{exm:A9}
We start with an exact factorization $\A_9 = \ker(B)\cdot\Imm(B)=\A_{7}\cdot L$, where
$$
\A_7 = \langle (1234567), (123) \rangle,\quad
L = \langle (1456)(2789), (1683)(2579) \rangle \simeq (\mathbb{Z}_3\times\mathbb{Z}_3):Q_8.
$$
Then $H=\Imm(\widetilde{B})=(\Ss_7\times\Ss_2)\cap\A_9=\langle \A_7, r \rangle$, where
$r=(17)(25)(34)(89)$. Note that $R=H\cap L=\langle r\rangle$.
The group~$L$ has three isomorphic subgroups of index~2, and we choose one of them,
$$
\ker(\widetilde{B})=S=\langle (167)(238)(459), (1683)(2579) \rangle \simeq (\mathbb{Z}_3\times\mathbb{Z}_3):\mathbb{Z}_4.
$$
Then the RB-operator~$B$ on $\A_9$ is defined by Remark~\ref{rem:R=2}.
Namely, each $x\in\A_9$ has the form $x = kl = kt^\delta y$, where $k \in \ker(B)$, $y\in \ker(\widetilde{B})$ and $\delta\in\{0,1\}$. The operator~$B$ acts as follows:
$$
B(x)=B(kt^\delta y)=l^{-1}r^\delta.
$$
\end{example}

The following example provides an infinite series of non-splitting Rota---Baxter operators on $\A_n$.

\begin{example}\label{exm:An}
Let $m=2$, $q=p$ be a prime, and $p\equiv3\!\pmod4$. There is an infinite number of such primes $p$ and the conditions (i)--(iii) of Theorem~\ref{thm:alternating} hold. 
For every such $p$ we get a~Rota---Baxter operator on $\A_{p^2}$ as in Example~\ref{exm:A9}.

We start with an exact factorization $\A_n = \ker(B)\cdot\Imm(B)=\A_{n-2}\cdot L$, where
$L$ is a sharply 2-transitive group.
Then $H=\Imm(\widetilde{B})=(\Ss_{n-2}\times\Ss_2)\cap\A_n=\langle \A_{n-2}, r \rangle$, where
$r\in R=H\cap L$.
The group~$L$ has two non-isomorphic subgroups of index~2 and we choose one of them 
as $\ker(\widetilde{B})$. Denote $S = \ker(\widetilde{B})$.

Each $x\in\A_n$ has the form $x = kl = kt^\delta y$, where $k \in \ker(B)$, $y\in \ker(\widetilde{B})$ and $\delta\in\{0,1\}$. 
The operator $B$~again acts by the formula~\eqref{eq:ActionBWhen|R|=2}.
Let us clarify the structure of the descendent group~$\A_n^{(\circ)}$~\eqref{R-product}.
We have $\A_n^{(\circ)} = \A_{n-2}\rtimes L = (\A_{n-2}\times S^{\mathrm{op}}).2$,
where for all $h\in \A_{n-2}$, $l\in L$ one computes
$$
l\circ h 
= lB(l)hB(l)^{-1} 
= r^\delta h r^\delta l
= h^{r^\delta}l
= h^{r^\delta}\circ l;
$$
the product $\circ$ on $\A_{n-2}$ coincides with $\cdot$;
the product $\circ$ on the set $S$ is antiisomorphic to $\cdot$, i.\,e. 
$s\circ s' = s's$.
\end{example}
\vspace{0.5em}

The authors can make the GAP code used in this study available upon request.

\section*{Acknowledgements}

The authors are grateful to Andrey Mamontov for the helpful discussions.

The study was supported by a grant from the Russian Science Foundation \textnumero~23-71-10005, https://rscf.ru/project/23-71-10005/

\noindent Alexey Galt \\
Vsevolod Gubarev \\
Novosibirsk State University \\
Pirogova str. 1, 630090 Novosibirsk, Russia \\
Sobolev Institute of Mathematics \\
Acad. Koptyug ave. 4, 630090 Novosibirsk, Russia \\
e-mail: galt84@gmail.com, wsewolod89@gmail.com

\end{document}